\newtheorem{theorem}{Theorem}[section]
\newtheorem{proposition}[theorem]{Proposition}
\newtheorem{lemma}[theorem]{Lemma}
\newtheorem{observation}[theorem]{Observation}
\newtheorem{claim}[theorem]{Claim}
\newtheorem{exercise}[theorem]{Exercise}
\newtheorem*{technical}{Technical Clarification}
\newtheorem*{aside}{Aside}
\newtheorem*{notation}{Notation Convention}
\newcommand{\bbZ}{\mathbb{Z}}
\newcommand{\bbR}{\mathbb{R}}
\newcommand{\pluralspace}{\nolinebreak\hspace{1pt}\nolinebreak } 
\newcommand{\bdy}{\partial}
\newcommand{\rel}{\textrm{ rel }}
\newcommand{\scaps}{\,\cap\,}
\newcommand{\scups}{\,\cup\,}
\newcommand{\interior}{\mathring}
\newcommand{\abs}[1]{|#1|}
\newcommand{\circleplus}{\oplus}
\newcommand{\isom}{\cong}
\newcommand{\point}{\mathrm{point}}
\newcommand{\setdifference}{\mathbin{ {\raise.1ex\hbox{$-$}} {\mkern-3mu}
  {\raise-.2ex\hbox{$\scriptscriptstyle \setminus$}}}} 
\newcommand{\Embeds}{\textsl{Embeds}}
\newcommand{\Diffeo}{\mathrm{Diffeo}}
\newcommand{\mathendash}{\text{--}}
\DeclareMathOperator{\connectsum}{\#}
\DeclareMathOperator{\proj}{proj}
\DeclareMathOperator{\nbd}{nbd}
\DeclareMathOperator{\interioroperator}{interior}
\DeclareMathOperator{\frontier}{frontier}
\title[The $4$-Dimensional Light Bulb Theorem (after David Gabai)]{The $4$-Dimensional Light Bulb Theorem (after David Gabai)}
\author[Robert D. Edwards]{Robert D. Edwards}
\begin{document}

\thanks{Acknowledgement: My heartiest thanks to Maggie Miller of Princeton University for her invaluable help in turning my crude ascii manuscript and hand drawings into this electronic document. Without her help it would not have happened.}

\begin{abstract}
In this note I present my understanding of, that is to say the way I look at, David Gabai's proof of his recent 4-Dimensional Light Bulb Theorem (4D-LBT). His construction, entirely smooth, is an ingenious amalgam of classical moves, and represents the first new hands-on advance in constructive smooth 4-manifold theory, that I am aware of, in a long time.
\end{abstract}

\maketitle

\setcounter{tocdepth}{2}
\setcounter{equation}{0}
\section{Introduction}

David Gabai recently posted \cite{gabai} a proof of what he fittingly calls the 4-Dimensional Light Bulb Theorem (4D-LBT, for short).

\begin{theorem}[4D-LBT]
Everything is smooth. Suppose $\Sigma \subset S^2 \times S^2$ is an embedded 2-sphere which intersects some $S^2 \times \{\point\}$ in a single point, transversely. Then $\Sigma$ is unknotted. That is, $\Sigma$ can be diffeotoped to one of $\bbZ$-many canonical positions, indexed by the degree of the projection of $\Sigma$ to the first factor $S^2$ (after orienting the various $2$-spheres here).
\end{theorem}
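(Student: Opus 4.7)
The only available isotopy invariant is the degree $d = \deg(\pi_1|_\Sigma) \in \bbZ$, so the theorem reduces to isotoping $\Sigma$ to the canonical graph $\Sigma_d = \{(x,\phi_d(x)) : x \in S^2\}$ for $\phi_d$ a standard smooth degree-$d$ map. My plan is a Morse-theoretic reduction on $\Sigma$, using the transverse sphere $R = S^2 \times \{\point\}$ as a ``light bulb'' to dissolve the 4-dimensional obstructions to handle cancellation.

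\textbf{Step 1: Morse setup.} Pick a height function $h\colon S^2 \to [-1,1]$ on the second factor with $\point$ its unique maximum, and set $f = h \circ \pi_2|_\Sigma$. After a small perturbation $f$ is Morse; the transversality hypothesis forces $f$ to have a unique global maximum at $q = \Sigma \scaps R$. If I can further isotope $\Sigma$ so that $f$ has only one more critical point (a unique minimum), then each non-critical level of $f$ in $\Sigma$ is a single circle, and $\Sigma$ is a smooth graph over the first factor; a final convex-homotopy argument in the space of degree-$d$ smooth maps $S^2 \to S^2$ carries this graph to $\Sigma_d$.

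\textbf{Step 2: Pairwise cancellation via light bulb moves.} Excess critical points of $f$ come in cancelling index pairs joined by a gradient trajectory $\gamma \subset \Sigma$, and each cancellation is realized geometrically by an isotopy of $\Sigma$ across a Whitney-type disk $W \subset S^2 \times S^2$ whose boundary is $\gamma$ together with a horizontal arc. Generically $W \scaps \Sigma$ is a finite set of interior double points, and the four-dimensional difficulty is to push them off. For each such $x \in W \scaps \Sigma$, choose an embedded arc $\alpha \subset \Sigma$ from $x$ to $q$, and a nearby parallel copy $R'$ of $R$ meeting $\Sigma$ transversely in a single point near $q$. Tube $W$ along a normal push-off of $\alpha$ and then around $R'$: since $R'$ is a closed 2-sphere with trivial normal bundle, the tube can close up, and the original intersection $x$ is traded for a tube that runs along $R'$ entirely off of $\Sigma$. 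This is the direct 4-dimensional analogue of the classical light bulb trick.

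\textbf{Main obstacle and conclusion.} The principal hurdle, and where I expect the true content of Gabai's argument to live, is controlling the cascade generated by Step 2: each tube around $R'$ introduces new self-intersections of $W$ (from the two strands of the tube crossing the old $W$) and new intersections with other parallel copies of $R$, and naive iteration diverges in exactly the way that has historically defeated hands-on smooth 4-manifold constructions. Success requires an honest complexity function --- plausibly lexicographic in (number of critical points of $f$, algebraic/geometric intersection numbers of $\Sigma$ with a standard family of parallel transverse spheres, and some combinatorial measure of the flow graph on $\Sigma$) --- that is strictly reduced by a carefully ordered sequence of tube, finger, and Whitney moves. Once such a terminating procedure is in hand, $f$ is reduced to a min/max pair, $\Sigma$ is visibly the graph of a smooth degree-$d$ map, and the theorem follows.
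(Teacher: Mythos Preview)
Your proposal names the right ingredient --- tubing over parallels of the transverse sphere $R = S^2\times\{\point\}$ --- but, as you yourself concede in the ``Main obstacle'' paragraph, it does not resolve the cascade, and that is exactly where the content lies. There is also an error in Step~1: your model $\Sigma_d=\{(x,\phi_d(x))\}$ meets $S^2\times\{\point\}$ in $|\deg\phi_d|$ points, so it fails the hypothesis unless $|d|=1$. The corrected target $\{(\phi_d(y),y)\}$ (a graph over the \emph{second} factor) does meet $R$ once, but the inference ``$f=h\circ\pi_2|_\Sigma$ has one max and one min $\Rightarrow$ $\Sigma$ is such a graph'' is unjustified: the level circles $f^{-1}(t)\subset S^2\times h^{-1}(t)$ can still be knotted and wander arbitrarily in the $S^2$ direction.

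The paper organizes the argument differently and thereby sidesteps the cascade altogether. After using the transversality hypothesis to localize $\Sigma$ to a properly embedded rectangle $\Gamma\subset S^2\times I\times J$ that is standard near $\bdy(I\times J)$, one applies Morse theory to $\proj_J|_\Gamma$ with the goal of eliminating \emph{all} critical points, not reducing to two. The cancellations are carried out not via $4$-dimensional Whitney discs but via $3$-dimensional moves inside slices $S^2\times I\times\{t\}$: the key Disjointness Lemma isotopes a $1$-handle band $H_1$ off a $0$-handle disc $D_i$ by pushing the intersection arcs along $D_i$ toward $\bdy D_i$ and rerouting around a transverse $2$-sphere $\Delta\subset S^2\times I$ to the already-surgered spanning arc, the crucial point being that the covering ambient isotopy of $S^2\times I$ \emph{fixes that arc}. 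This is a finite, explicit procedure with no self-feeding. Once $\proj_J|_\Gamma$ is a submersion, $\Gamma$ is the trace of a loop in $\Embeds_{\bdy\textsl{std}}[I,S^2\times I]$, and a short fibration argument identifies $\pi_1$ of that space with $\pi_2(S^2)\cong\bbZ$, which is the degree invariant you were after.
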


Thus, for some $n \in \bbZ$, $\Sigma$ is isotopic to $\{z_0\} \times S^2 \connectsum_{1 \le i \le \abs{n}} S^2 \times \{z_i\}$, where the $z_i$\pluralspace s, are distinct and the connect-summing is done at the intersection points in a consistent manner to make the degree of the projection correct. 

As its name suggests, this theorem may be regarded as a generalization of the classical 3-D Light Bulb Theorem, where $S^2 \times S^2$ is replaced by $S^2 \times S^1$ and $\Sigma$ is a $1$-sphere, and the conclusion is that $\Sigma$ can be diffeotoped to $\{\point\} \times S^1$.  Also this Theorem may be compared directly with the analogous result in dimension 2, where $S^2  \times S^2$ is replaced by $S^1 \times S^1$ and $\Sigma$ is a 1-sphere, that is $S^1$. We note that Gabai's Theorem completes the entire spectrum of such theorems, for $n = 1,2,3, ...$.

\begin{technical}\rm We note that the isotopy class of the {\it{embedding}} whose image is $\Sigma$, depends as well on the degree of its projection to the second factor, which can be $\pm 1$. Thus the collection of such isotopy classes of embeddings is naturally isomorphic to $\bbZ \circleplus \bbZ/2\bbZ$. In all but the last Step 3 below, orientations are not of concern.
\end{technical}

I found Gabai's proof, although only five pages long (pages 4--8 of \cite{gabai}), to be a challenge to understand. (Indeed he offers two proofs, the second using a different technique and substantially longer, which I haven't read.) So, in the hope of making easier others' reading of his proof, I offer here my interpretation of it, presented in a manner that is more in the fashion of traditional proofs of this sort. If you wish, this paper may be regarded as my Bourbaki-Seminar-type rendering of Gabai's ingenious construction. The background requirements are quite elementary, primarily some basic differential topology, including some familiarity with Morse theory.

Gabai also presents in \cite{gabai} a number of interesting corollaries, extensions and variations of the above theorem, and also some history. None of this is repeated here, but all are worth reading.

\section{The Proof of the 4D-LBT}

We'll work entirely in the smooth category throughout this paper (although at a certain point we introduce corners for convenience, as in \cite{gabai}). So all embeddings, isotopies, etc., are assumed to be smooth. No further mention of this assumption will be made. Alternatively one could work entirely in the PL manifold world, but then for smooth conclusions one would have to rely on some esoteric PL versus DIFF results in dimension 4.

\begin{aside}\rm 
If one were to write out the following proof in complete (``machine-readable'') detail, it would become a morass of functions, isotopies and their compositions, and of 3D and 4D sets and subsets, whose notational complexity would obscure a very concrete and clear construction. So following Gabai I present the proof in a manner which keeps named functions to a bare minimum, instead describing various motions in more common, descriptive language (that geomorphic topologists use informally all the time). As an example I (like Gabai) have avoided naming the implicit embedding $\phi \colon S^2 \hookrightarrow S^2 \times S^2$ whose image is $\Sigma$, and also I have not explicitly named any isotopy of $\phi$ or its image that ensues. I leave such elaboration of details of exposition to the reader.
\end{aside}

I'll present the proof in three Steps (whose percentages of overall content and difficulty, for a reader comfortable with this sort of topology, are roughly 5\%, 80\% and 15\%, I'd estimate).

 And now we begin the proof of the 4D-LBT.

\subsection*{Step 1} {\bf Isotoping $\Sigma$ to be in Window Position} (my term).
\smallskip

Given $\Sigma$ as in the 4D-LBT, we can assume (by familiar transversality maneuvers, that after a small isotopy) there is a $2$-disc $D^2 \subset S^2$ so that $\Sigma \cap S^2 \times D^2 = \{\point\} \times D^2$. We wish in effect to reparametrize the complement of $D^2$ in $S^2$ to make it rectangular (instead of round). There are any number of ways to describe this simple-but-important change of viewpoint. Here I'll offer mine.

Let {\it window} $W = W^2 \subset S^2 \setdifference D^2$ be a closed rectangular region whose sides are arcs of constant latitude or longitude. Now, thinking of $W$ as being centered diametrically opposite $D^2$,``radially" expand $D^2$ in $S^2$ to make $\interior D^2$ cover $S^2 \setdifference \interior W$.  So now the `interesting' (i.e.\ non-standard) part of $\Sigma$ lies in $S^2 \times \interior W$. From now on we restrict our attention to $S^2 \times W$ and $\Gamma := \Sigma \cap S^2 \times W$.

\begin{figure}[hbt]
\centering\includegraphics[scale = .8]{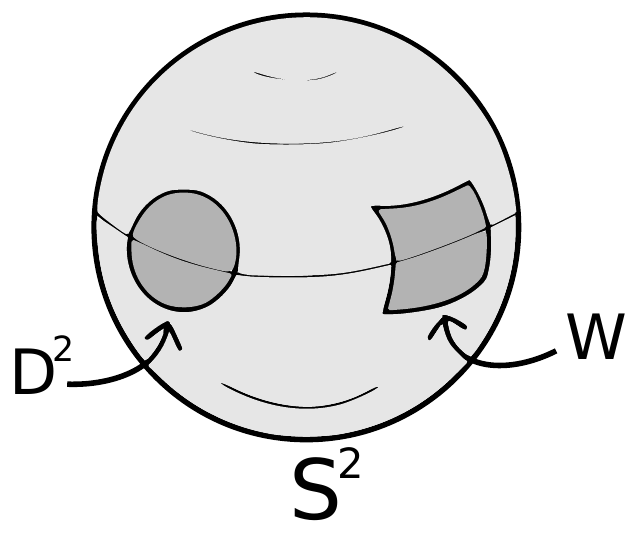}
\caption{The round $D^2 \subset S^2$ and the rectangular window $W^2 \subset S^2$}
\end{figure}

So to prove the 4D-LBT, it suffices to prove:

\begin{theorem}[4D-LBT, Rectangle version] Suppose that $W$ is a closed 2-dimensional rectangle, and that $\Gamma \subset S^2 \times W$ is the image of a boundary-faithful embedding of W which carries some $\nbd(\bdy W)$ identically onto  $\{\point\} \times \nbd(\bdy W)$, where $\nbd(\bdy W)$ is a neighborhood of $\bdy W$ in $W$. Then $\Gamma$ is unknotted, meaning that $\Gamma$ can be isotoped to one of countably-infinitely many canonical positions, indexed by the degree of the projection of $\Gamma$ to the first factor $S^2$.
\end{theorem}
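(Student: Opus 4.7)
The plan is to view $W$ as $I_s \times I_t$, use the projection to $I_t$ as a generic Morse function on $\Gamma$, and analyze $\Gamma$ as a movie of slices
$$\Gamma_t := \Gamma \cap \bigl(S^2 \times I_s \times \{t\}\bigr) \subset S^2 \times I_s.$$
At $t = 0,1$ each slice is the standard arc $\{\point\}\times I_s$, while at each regular intermediate time the slice is a distinguished arc $\alpha_t$ joining $\{\point\}\times\{0\}$ to $\{\point\}\times\{1\}$ together with some disjoint simple closed curves, and the family passes through birth, death, and saddle events at finitely many critical $t$-values. The target is to isotope $\Gamma$, rel a neighborhood of $\bdy W$, until every slice is a single arc of the form $\{z(t)\}\times I_s$. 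The resulting $\Gamma$ is then canonical and classified by the homotopy class of the associated map $z\colon W \to S^2$ with $z|_{\bdy W}\equiv \point$ in $\pi_2(S^2)=\bbZ$, which matches the projection degree.

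First I would kill every circle component from the movie. Any closed curve $C$ in $S^2 \times I_s$ is null-homotopic since $\pi_1(S^2 \times I_s)=0$, so by standard innermost-disc arguments it bounds an embedded compressing disc; intersections of such a disc with the distinguished arc $\alpha_t$ can be pushed off by light-bulb moves that use the extra room in $S^2$. I would use these discs parametrically in $t$ to cancel each birth against the next death and to kill saddles that only rearrange circles, until the only topology left in the movie is the distinguished arc. Then I would straighten the remaining arc family: each $\alpha_t$ is a properly embedded arc in $S^2 \times I_s$ with standard endpoints and so, by a parametric isotopy using $\pi_1(S^2)=0$, can be brought to coincide with a vertical arc $\{z(t)\}\times I_s$ for a smoothly varying $z(t)\in S^2$. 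Letting $t$ range over $I_t$ assembles the map $z\colon W\to S^2$ rel $\bdy W$, whose $\pi_2$-class records the degree and classifies $\Gamma$ up to isotopy.

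The main obstacle will be parametric control in the first step. Simplifying one slice is routine, but doing it consistently for all $t$ requires choreographing compressing discs and the resulting isotopies across the critical times where new circles and saddles are born or die, so that the cumulative effect is an honest ambient isotopy of $S^2 \times W$ rather than a frame-by-frame redrawing. I expect this to demand an induction on a suitable complexity (e.g.\ the number of Morse events of the movie, or the total count of circles appearing in a generic slice), repeatedly exploiting the freedom in the $S^2$ factor to reduce intersection numbers before each cancellation. This globally coherent cancellation procedure is, I suspect, precisely where Gabai's ingenuity and the bulk of Step~2's content reside.
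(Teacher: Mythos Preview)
Your overall architecture matches the paper's: put $\proj_J|\Gamma$ in Morse position, cancel all critical points so that $\Gamma$ becomes the trace of a loop of arcs in $S^2\times I$ (the paper's Leveling Proposition), then classify such loops by an element of $\pi_2(S^2)$. But both substantive steps are under-specified in ways that conceal the actual content, and in one place the proposed mechanism is not the one that works.

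For the cancellation, the scheme ``cancel each birth against the next death'' is not the right handle calculus: the circles are $0$-handles $D_i$, and each must be cancelled against the $1$-handle band $H_i$ that attaches it to the main arc, not against a $2$-handle. The genuine obstruction---and the place where Gabai's idea lives---is that $H_1$ may run through the interiors of the other discs $D_2,\dots,D_p$ in $S^2\times I$, so the $(D_1,H_1)$ pair cannot simply be separated from the rest. The paper's key move (the Disjointness Lemma) is to push $H_1$ up to a higher $J$-level where $H_2,\dots,H_p$ have already been attached, and there tube $H_1$ over a specific transverse $2$-sphere $\Delta$ to the resulting arc $I_{2\mathendash p}$; because this tubing \emph{fixes} $I_{2\mathendash p}$, the motion extends to an honest isotopy of $\Gamma$. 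Your ``compressing discs for circles, pushed off $\alpha_t$ by light-bulb moves'' gestures toward this but does not supply the mechanism---which sphere to tube over, and at which $J$-level, so that the ambient arc is preserved and the operation globalises. For the final straightening, note that the normal form $\{z(t)\}\times I_s$ is incompatible with the boundary condition (it forces $z(t)\equiv\point$, giving a single class rather than $\bbZ$-many), and ``$\pi_1(S^2)=0$'' is not the relevant input: what is actually needed is the computation $\pi_1(\Embeds_{\bdy\textsl{std}}[I,S^2\times I])\cong\pi_2(S^2)$, which the paper obtains in Step~3 from a fibration argument comparing with $SO(2)\hookrightarrow SO(3)\twoheadrightarrow S^2$.
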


The proof of this theorem is the goal of the remainder of this paper.

\subsection*{Step 2. Isotoping $\Gamma$ to become (the trace of) an isotopy.}

In what follows we parametrize $W$ as $W = I \times J$, where I and J are closed intervals. The following Proposition is the heart of Gabai's proof.

\begin{proposition}[Leveling Proposition \footnote{It seems to me that this fundamental Proposition merits its own name. I'm not all that happy with `Leveling'. Suggestions welcomed.}$ ^{,}$\footnote{This Proposition readily extends to the more general context where one merely assumes that (the embedding) $\bdy \Gamma \subset S^2 \times \bdy(I \times J)$ `respects' the four sides of $\bdy(I \times J)$. That is, at the $I \times \bdy J$ levels, $\bdy \Gamma$ represents two (arbitrary) boundary-faithful embeddings of $I$ into $S^2 \times I$, and along the sides $\bdy I \times J$, $\Gamma$ represents an isotopy of $\bdy I$ in $S^2 \times \bdy I$. In this context the Proposition can be regarded as a codimension-2 version of Concordance (or Pseudoisotopy) implies Isotopy in dimension $4$.}]

Suppose that $I$ and $J$ are compact intervals, and suppose that $\Gamma \subset S^2 \times I \times J$ is the image of a boundary-faithful embedding of $I \times J$ which is standard (the ``identity'') on a neighborhood of $\bdy (I \times J)$ in $ I \times J$. Then $\Gamma$ can be isotoped rel $\bdy \Gamma$ so that $\proj_J | \Gamma$ has no critical points on $\Gamma$, where $\proj_J \colon S^2 \times I \times J \rightarrow J$ is the projection. That is, $\Gamma$ can be isotoped so that $\proj_J | \Gamma \colon \Gamma \twoheadrightarrow J$ is a submersion, and consequently for each $t \in J$, $\Gamma \cap S^2 \times I \times \{t\}$ is a spanning arc in $\Gamma$ joining the two components of $(\bdy I) \times J$.
\end{proposition}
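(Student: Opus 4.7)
The first move is to perturb $\Gamma$ slightly, rel $\bdy \Gamma$, so that the height function $h := \proj_J|_\Gamma \colon \Gamma \to J$ is Morse on the interior of $\Gamma$ with distinct critical values; near $\bdy \Gamma$ there are no critical points because $\Gamma$ is already standard there. Since $\Gamma$ is a $2$-disc, interior critical points have index $0$, $1$, or $2$, and the task is to cancel them all by an ambient isotopy of $\Gamma$ in $S^2 \times I \times J$, rel $\bdy \Gamma$.

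\textbf{Movie picture and cancellation mechanism.} Read $h$ as a movie parametrized by $t \in J$. For each regular value, the level slice $\Gamma_t := \Gamma \cap (S^2 \times I \times \{t\})$ is a properly embedded $1$-manifold in the $3$-manifold $S^2 \times I$ whose boundary consists of the two prescribed points; hence $\Gamma_t$ is a single spanning arc together with finitely many ``bubble'' circles. As $t$ crosses a critical value of $h$, the movie changes by one of three elementary moves: a bubble birth (index $0$), a band-sum saddle (index $1$), or a bubble death (index $2$). Pairs of adjacent critical points of consecutive indices cancel abstractly on the $2$-disc $\Gamma$, and the argument reduces to realizing each such abstract cancellation by an ambient isotopy. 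Concretely, to cancel a birth--saddle pair that introduces a bubble $C$ and then immediately bands it back onto the spanning arc (or onto another bubble), it suffices to produce an embedded $2$-disc $\Delta \subset S^2 \times I$ bounded by $C$ and otherwise disjoint from $\Gamma_t$ at the birth time; sweeping $\Gamma$ across $\Delta$ in a thin slab $S^2 \times I \times [t_0 - \varepsilon, t_0 + \varepsilon]$ removes both critical points, and the analogous statement handles saddle--death pairs.

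\textbf{Main obstacle.} The crux is producing the capping disc $\Delta$ with the required embeddedness and disjointness. Since $S^2 \times I$ is simply connected, $C$ always bounds \emph{some} immersed disc, but a generic such disc will meet the spanning arc of $\Gamma_t$ in a nonzero number of points---the essential intersection number being detected by the first-factor $S^2$ degree of $C$. The remedy is to use the extra $J$-direction of the ambient $4$-manifold: one lifts the capping disc into $S^2 \times I \times J$ and uses Whitney-type finger moves, enabled by $\pi_1(S^2 \times I) = 0$ and by the $\pi_2(S^2 \times I) = \bbZ$ generated by the sphere $S^2 \times \{\point\}$, to trade off excess intersections against changes in bubble degree. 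Running in parallel is a Cerf-graphic organization of the critical points of $h$, bringing cancelling pairs into adjacent position one at a time while tracking every bubble's degree across each elementary move; the degree is the global invariant which determines when a cancellation is obstructed, and it must be monitored so that the cancellations can be performed in an order that preserves embeddedness throughout. Threading the ambient geometric cancellation together with this Cerf-style bookkeeping is the technical heart of the Leveling Proposition, and I expect it to be where the real difficulty lies.
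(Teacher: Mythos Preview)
Your setup is correct and matches the paper: put $\proj_J|_\Gamma$ in Morse position, read the movie of arcs-plus-bubbles in $S^2 \times I$, organize the handles so that each $0$-handle is paired with a $1$-handle (and dually above), and then try to cancel. The gap is in your ``Main obstacle'' paragraph, where the actual argument must happen and instead you offer a plan whose ingredients are either unnecessary or misdirected.

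The mechanism that works is purely three-dimensional and is exactly the light-bulb trick. The spanning arc $I_0 \subset S^2 \times I$ has a \emph{transverse $2$-sphere} $\Omega$ (the boundary of a thin neighborhood of $I_0 \cup (S^2 \times \bdy I)$), meeting $I_0$ once. To make the capping disc $D$ for a bubble $C$ miss the $1$-handle band $H$, one tubes $D$ along $H$ into parallel copies of $\Omega$, producing a new disc $E$ with $\bdy E = C$ and $E \cap H = \emptyset$; then $D$ is isotoped to $E$ rel $C$ inside $S^2 \times \interior I \setminus I_0 \cong \bbR^3$, and the pair cancels. No lift into the $J$-direction, no Whitney or finger moves. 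When there are $p \ge 2$ bubbles the real subtlety appears, and your sketch does not touch it: one must first make the band $H_1$ disjoint from the \emph{other} discs $D_2,\ldots,D_p$. The paper does this (its Disjointness Lemma) by pushing $H_1$ up one $J$-level above the remaining $H_i$, and there using transverse spheres $\Delta_i$ to the \emph{surgered} arc $I_{2\mathendash p}$ to slide $H_1$ off each $D_i$ by an isotopy of $S^2 \times I$ that \emph{fixes $I_{2\mathendash p}$}; because $I_{2\mathendash p}$ is fixed, this isotopy extends level-preservingly over a $J$-slab without disturbing $\Gamma$ below. That move is the heart of the proof.

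Finally, your assertion that ``the degree is the global invariant which determines when a cancellation is obstructed'' is wrong here: there is no obstruction, and every pair cancels. The $\pi_2(S^2)$ degree enters only \emph{after} leveling, in classifying the resulting loop of arcs (the paper's Step~3); it plays no role in the Leveling Proposition itself. So the Cerf-style degree bookkeeping you propose is addressing a non-problem while the genuine geometric input---the transverse spheres in $S^2 \times I$---is missing.
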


Thus, after repositioning, $\Gamma$ may be regarded as the trace (= image in $S^2 \times I \times J$) of a boundary-faithful isotopy, parametrized by $t \in J$, of $I$ in $S^2 \times I$, constant on $\bdy I$, which begins and ends with the standard inclusion $\{\point\} \times I \hookrightarrow S^2 \times I$.

\begin{proof} The proof will occupy the remainder of Step 2.

We may assume that $\proj_J|\Gamma$ is a Morse function on $\Gamma$, that is, its critical points are nondegenerate. The proof of the Leveling Proposition uses ``codimension-2 embedded Morse theory'' to cancel these critical points against each other, which will be accomplished by isotopy of $\Gamma$ in $S^2 \times I \times J$ and will leave $\Gamma$ in the desired position. (A baby example of such codimension-2 embedded Morse theory is when one repositions a knot in $\bbR^3$ to have some desired nice form with respect to a height function there. An example is putting the knot in ``bridge position.'' Also, I think of the proofs of the smooth $S^1 \subset \bbR^2$ and $S^2\subset \bbR^3$ Schoenflies Theorems as being examples of codimension-1 embedded Morse theory.)

For concreteness we suppose that interval $I = [-1,1]$ and interval $J = [-8,8]$. (This curious choice for $J$ mimics Gabai, enabling that the other key $J$-levels which come up in his argument be integers. Readers averse to all of the negative $J$-integers below may adjust this choice according to their tastes.) We may  assume that there are $p \ge 0$ index-$0$ (local minima) critical points of $\proj_J|\Gamma$, all of which appear at $J$-level $-7$, and $q \ge 0$ index-$2$ (local maxima) critical points in $\Gamma$, all of which appear at $J$-level $7$. Thus there are $p+q$ index-$1$ (saddle) critical points in $\Gamma$, which initially we can assume all appear at $J$-level $0$. We may think of these critical points as providing a handle structure on (say) $\Gamma \rel I \times \{-8\}$  (or, if you prefer, on $\Gamma \cup (I \times [-9,-8]) \rel I \times [-9,-8]$ ). By 1-handle slides (internal in $\Gamma$) we may assume that each of the first $p$ of the $1$-handles attaches (connects) a single $0$-handle to the component of $\Gamma \cap ( S^2 \times I \times [-8,0) )$ which contains no critical points. These $p$ $1$-handles we then can push downward to $J$-level $-3$, and the remaining $q$ $1$-handles we can push upward to $J$-level $3$.

\begin{figure}[hbt]
\centering\includegraphics{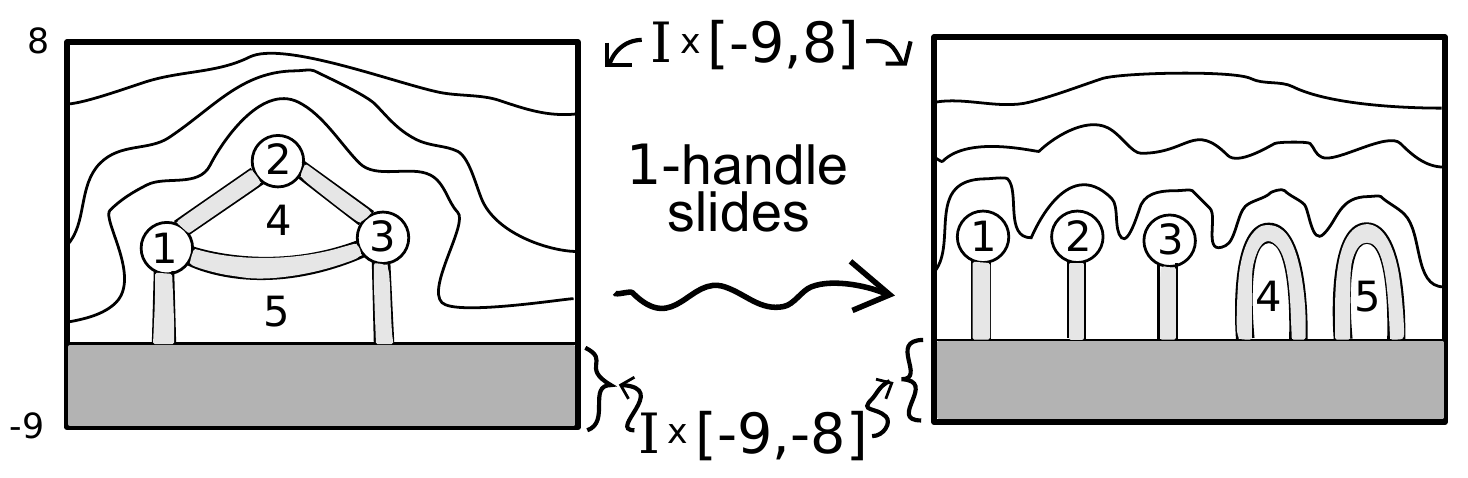}
\caption{Here the discs $1$, $2$ and $3$ represent (local) minima, and the regions $4$ and $5$ represent (local) maxima}
\end{figure}

From now on we'll restrict attention to the subset $\Gamma \cap (S^2 \times I \times [-8,0])$ of $\Gamma$, showing how to cancel the $2p$ critical points there. Once that is done, we can use the same argument (turned upside-down, as usual) to cancel the $2q$ critical points in $\Gamma$ which lie above $J$-level $0$.

We break the remainder of the proof of the Leveling Proposition, that is Step 2, into four subSteps.
\smallskip

\par {\bf{Step $\mathbf{2_0}$. Horizontal-Vertical repositioning of $\Gamma$.}}

We want to recast our picture of $\Gamma$ using ``horizontal-vertical'' positioning, in order to visualize more clearly the handle structure on $\Gamma$. See Figure 3. (This sort of positioning is standard in codimension $\ge 1$ embedded Morse theory. Strictly speaking this means that we're passing to the ``smooth-with-corners'' category, at least for purposes of description. But we're comfortable with that.)

\begin{notation}\rm
 We'll describe certain subsets of our (to-be-repositioned) $\Gamma$ in $S^2 \times I \times [-8,0]$ as being either  ``horizontal'' or ``vertical.'' Such a subset will be of the form $K \times L$, with $K \subset S^2 \times I$ and $L \subset J$, where either
\begin{itemize}
  \item $K \times L$ is ``horizontal,'' meaning that $K$ is a compact $2$-manifold (each component of which is a round $2$-disc representing a $0$-handle, or an embedded rectangle representing a $1$-handle) and $L$ is a point in $J$, or else
  \item $K \times L$ is ``vertical,'' meaning that $K$ is a $1$-manifold (namely a closed interval union a finite number of circles, with $K \scaps S^2 \times \bdy I = \bdy K$) and $L$ is a (nontrivial) subinterval of $J$.
\end{itemize}

Hence all of the key $K$-manifolds which appear below, namely $I_0$, the $D_i$\pluralspace s, the $C_i$\pluralspace s, the $H_i$\pluralspace s, $I_{1\mathendash p}$, $\Omega$ and $\Delta$, etc.,
 \emph{should be regarded as subsets of} $S^2 \times I$. When I want to regard these various sets (generically here denoted $X$) as subsets of $S^2 \times I \times [-8,0]$, I will write them as $X \times \{t\}$ for some specific value of $t \in [-8,0]$. Similarly for their products with $J$-intervals I will write $X \times [s,t]$ (or $X \times (s,t)$, etc.) for $s, t \in [-8,0]$. Indeed, one potentially confusing aspect of the proof is keeping straight where various named subsets of $S^2 \times I \times J$ lie. I hope that my exposition keeps this issue relatively clear. (To contrast the above discussion with Gabai's presentation, we note that he generally regards and describes his various interesting subsets of $\Gamma$ (= $R$, for him) as \emph{always} being subsets of $S^2 \times I \times J$, and he then uses and manipulates their images in $S^2 \times I$ under the projection $\pi \colon S^2 \times I \times J \to S^2 \times I$, to describe his various situations and operations. So I'm choosing to avoid explicitly using $\pi$. Just a matter of taste.)
\end{notation}

\newpage

\begin{figure}[h]
\centering\includegraphics[scale = .85]{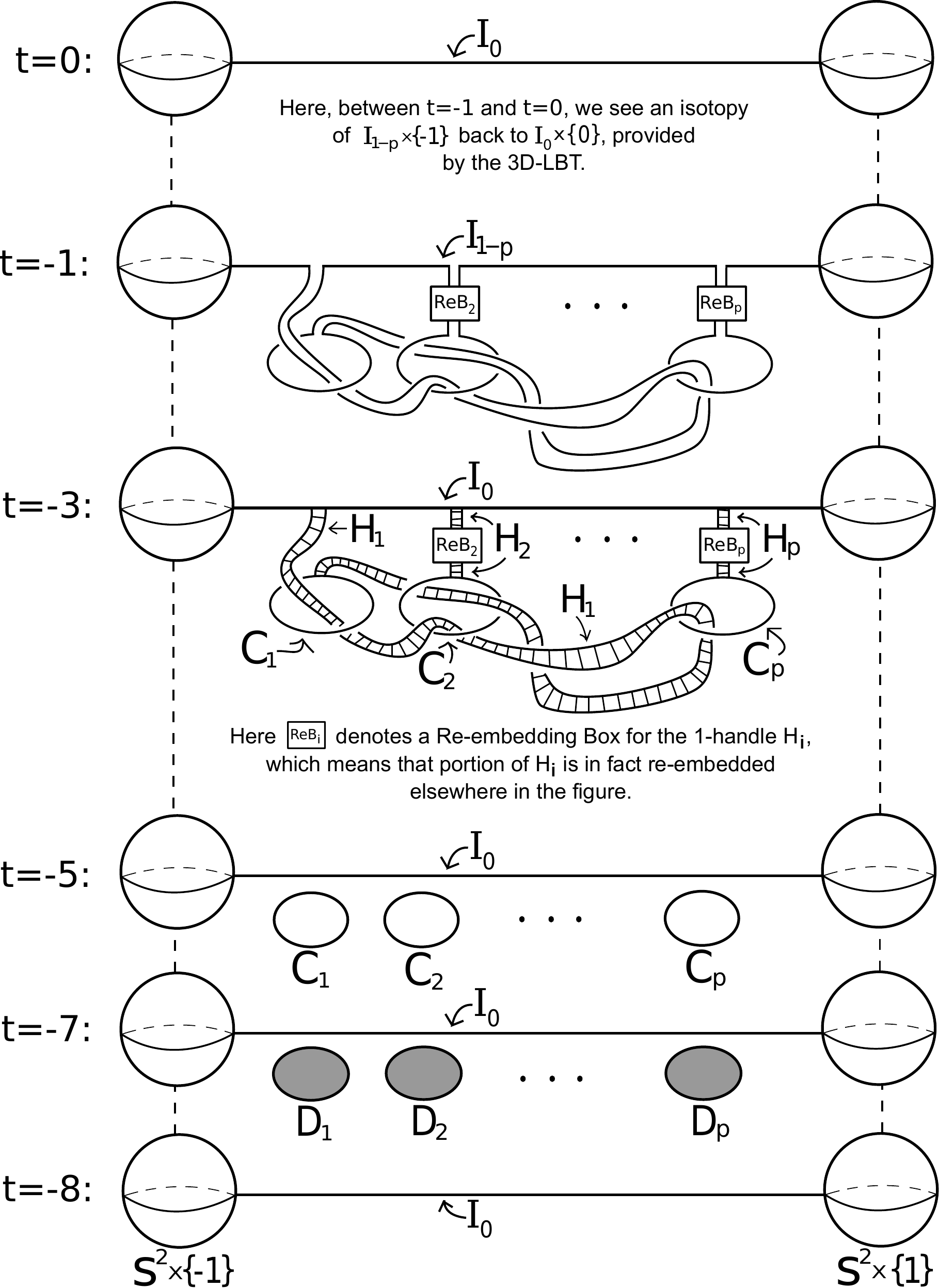}
\caption{The initial horizontal-vertical positioning of $\Gamma \cap S^2 \times I \times [-8,0]$. Here we show several 3-dimensional slices $\Gamma \cap S^2 \times I \times \{t\}$, for $t = -8, -7, -5, -3 ,-1$ and $0$.}
\end{figure}

\newpage

In this paragraph we describe our desired repositioning of (the portion of) $\Gamma$ in $S^2 \times I \times [-8,0]$, during which we leave fixed $\Gamma \scaps S^2 \times \bdy(I \times [-8,0])$, in the end putting $\Gamma \cap S^2 \times I \times [-8,-1]$ into horizontal-vertical position (see Figure 3). Working upwards in stages from the bottom $J$-level $-8$, we begin by arranging that $\Gamma$ is vertical between $J$-levels $-8$ and $-7$, that is, $\Gamma \scaps S^2 \times I \times [-8,-7) = I_0 \times [-8,-7)$, where $I_0  = \{\point\} \times I \subset S^2 \times I$ is a straight arc joining the two boundary spheres of $S^2 \times I$.  Then at $J$-level $-7$, we arrange to appear there $p$ disjoint flat round (and small, if you wish) 2-discs  $D_i \times \{-7\}$, $1 \le i \le p$, representing the $0$-handles of $\Gamma$, that is (neighborhoods of) the $p$ local-minimum critical points in $\Gamma$. For convenience and concreteness we imagine them as appearing neatly strung out alongside of (and disjoint from) $I_0 \times \{-7\}$. Next we can arrange that $\Gamma$ be vertical between $J$-levels -7 and -3, that is $\Gamma \scaps S^2 \times I \times (-7,-3) = (I_0 \scups \bigcup_{1 \le i \le p} C_i) \times (-7,-3)$, where $C_i = \bdy D_i$. Then at $J$-level $-3$ there appear $p$ embedded disjoint $2$-dimensional rectangular bands $H_i \times \{-3\}$, $1 \le i \le p$, representing the initial $p$ index-$1$ critical points in $\Gamma$ mentioned above, with band $H_i$ joining circle $C_i$ to arc $I_0$. Indeed each band $H_i$ we picture as a (possibly complicated looking knotted, twisting) rectangle parametrized as $H_i$ = $A_i \times B_i$, where arc $A_i$ joins $C_i$ to $I_0$ (these $A_i$\pluralspace s we think of as being knotted and linking through the $C_j$\pluralspace s with abandon), and the transversal arc $B_i$ is very short. (It also helps to think of each individual disc-arc pair $D_i$ and $H_i \cap I_0$ as being small and adjacent, whereas these different disc-arc pairs are relatively far apart, strung out along $I_0$ in $i$-order.) Next we can arrange that $\Gamma \cap S^2 \times I \times (-3,-1] = I_{1 \text{-} p} \times (-3,-1]$, where $I_{1\mathendash p}$ is the spanning arc in $S^2 \times I$ which results from doing $p$ surgeries on $I_0 \cup \bigcup_{1 \le i \le p} C_i$ using $\bigcup_{1 \le i \le p} H_i$. Finally, to finish our repositioning of (the bottom half of) $\Gamma$, we can assume that $\Gamma \cap S^2 \times I \times [-1,0]$ simply represents (the trace of) an isotopy of $I_{1 \text{-} p} \times \{-1\} \subset S^2 \times I \times \{-1\}$ back to $I_0 \times \{0\} \subset S^2 \times I \times \{0\}$, whose existence is justified by the 3D-LBT. (Note that this arbitrary isotopy may sweep over the $S^2$ factor with any possible degree in $\bbZ$, but we don't care; that's for later discussion (in Step 3). In the remainder of Step 2 we'll leave $\Gamma \cap S^2 \times I \times [-1,0]$ unperturbed.

At this point we turn our attention to canceling, via isotopy of $\Gamma$, these $p$ $0$-handles with the $p$ $1$-handles. This will be explained and accomplished in the remaining three subSteps of Step 2, which correspond to the cases $p = 1, p = 2$, and $p \ge 3$.
\smallskip

\par {\bf{Step $\mathbf{2_1}$. Here we suppose that $p = 1$.}}

In this case we note that we could cancel $D_1$ against $H_1$ provided that $H_1 \cap D_1 \subset C_1$, that is, $H_1 \cap \interior{D_1} = \emptyset$. For in that case we could push (by isotopy of $\Gamma$) $D_1 \times \{-7\}$ up to $J$-level $-3$, where $(D_1 \cup H_1) \times \{-3\}$ becomes a cancelable pair, in customary fashion. (Aside: Alternatively in this case we could push $H_1 \times \{-3\}$ down to $J$-level $-7$ and do our canceling there. Take your pick. I'll stick with the first option.)

From now on in this Step $2_1$ we'll drop the subscripts on $D_1, C_1$ and $H_1$, writing them as $D, C$ and $H$.

If $H \cap \interior D \neq \emptyset$ (in $S^2 \times I$, I remind the reader), we will use the following

\begin{observation}\label{observation}\leavevmode
\begin{enumerate}
\item There is a $2$-disc $E \subset S^2 \times \interior I$ such that $E \cap (C \cup H \cup I_0) = \bdy E = C$, {\rm{and}}
\item $D$ is \text{isotopic} to $E \rel C$ in $S^2 \times I \setdifference I_0$.
\end{enumerate}
\end{observation}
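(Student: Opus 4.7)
The plan is to prove the Observation by standard innermost-disc and outermost-arc surgery in the complement 3-manifold $M := S^2 \times I \setdifference I_0$. Since $M \isom (S^2 \setdifference \{\point\}) \times I \isom \bbR^2 \times I$ is contractible (hence irreducible, meaning every embedded $2$-sphere in $M$ bounds a $3$-ball), and since $D$, $C$, and $H$ are all disjoint from $I_0$, we may perform every modification of $D$ inside $M$. Once (1) is established, (2) is automatic, because any isotopy of $D$ rel $C$ within $M$ is by definition an isotopy in $S^2 \times I \setdifference I_0$.

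After a small isotopy rel $C$, we put $D$ transverse to $H$ so that $D \cap H = \alpha_C \cup X$, where $X \subset \interior D$ is a compact $1$-submanifold. Since $\alpha_C \subset C = \bdy D$ and $D \cap I_0 = \emptyset$, each arc of $X$ has both endpoints on the two long sides of $\bdy H$, and the remaining components of $X$ are circles in $\interior D \cap \interior H$.

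First I would kill the circles. Pick one that is innermost on $D$, bounding a subdisc $D' \subset D$ with $\interior D' \cap H = \emptyset$. This circle bounds two subdiscs in $H$; let $H' \subset H$ be the one not containing $\alpha_{I_0}$, so that $H' \subset M$. Then $D' \cup H' \subset M$ is a $2$-sphere, which by irreducibility of $M$ bounds a $3$-ball, and an ambient isotopy of $D$ through this ball replaces $D'$ by a push-off of $H'$, eliminating the innermost circle (and any that happen to lie inside $D'$). Iterate until $X$ consists only of arcs. Next I would kill the arcs: pick an outermost arc $\delta \subset X$ on $H$ whose cobounding subdisc $H'' \subset H$ does not contain $\alpha_C$, choosing the $\alpha_{I_0}$-side if necessary, where $D \cap \alpha_{I_0} = \emptyset$ ensures $H'' \cap D = \delta$. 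A finger-move isotopy then pushes a collar of $\delta$ in $D$ across $H''$ transverse to $H$. Because $I_0 \subset H$ along $\alpha_{I_0}$ and the push moves $D$ in the direction normal to $H$, the isotopy stays inside $M$. Each such move strictly reduces the number of arc components of $X$, so after finitely many iterations we obtain the desired disc $E$ satisfying $E \cap (C \cup H \cup I_0) = \bdy E = C$, proving (1); since the whole process is an isotopy of $D$ rel $C$ inside $M$, (2) holds as well.

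The main obstacle is verifying that the outermost-arc finger move remains inside $M$ even when the subdisc $H''$ meets $I_0$ along $\alpha_{I_0}$; this is handled by pushing $D$ transverse to $H$, which automatically avoids $I_0 \subset H$. A secondary delicacy is the choice of $H'$ in the innermost-disc step—one must consistently select the side of $H$ away from $\alpha_{I_0}$ so that the auxiliary sphere $D' \cup H'$ actually lies in $M$ and hence bounds a ball there.
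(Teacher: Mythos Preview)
Your argument has a genuine gap in the arc-removal step, and it is rooted in an early misstatement: $H$ is \emph{not} disjoint from $I_0$. The band $H$ attaches $C$ to $I_0$, so $H\cap I_0=\alpha_{I_0}$ is a nonempty subarc of $I_0$, and hence $H\not\subset M$. Now when you take the outermost half-disc $H''\subset H$ on the $\alpha_{I_0}$-side and push a collar of $\delta$ in $D$ across $H''$, the finger must cap off along $\partial H''\setdifference\delta$, which contains $\alpha_{I_0}$. Concretely, the new $D$ near $\alpha_{I_0}$ is (part of) the boundary of a thin regular neighborhood of $H''$; since $I_0$ enters and exits that neighborhood through the two endpoints of $\alpha_{I_0}$, the new $D$ necessarily meets $I_0\setdifference\alpha_{I_0}$ in two points. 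Your justification that ``the push moves $D$ in the direction normal to $H$'' addresses only the two parallel sheets of the finger, not the cap, which must wrap around the corners where $\alpha_{I_0}$ meets the long sides---and $I_0$ continues through exactly those corners, tangent to $\alpha_{I_0}$ rather than normal to $H$. Switching to the $\alpha_C$-side is no help either, since pushing across that half-disc would move $D$ across $\alpha_C\subset C=\partial D$, violating the rel-$C$ constraint.

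This obstruction is precisely what the paper's construction is built to circumvent. Instead of a bare finger move, the paper introduces the $2$-sphere $\Omega=\partial\bigl(\nbd(I_0\cup S^2\times\partial I)\bigr)$ and tubes each intersection arc of $H\cap\interior D$ along $H$ toward $\alpha_{I_0}$, then connect-sums with a parallel copy of $\Omega$; the sphere $\Omega$ is exactly the device that lets the tube go \emph{around} all of $I_0$ (and the boundary $S^2$\pluralspace s) rather than through it. Because that construction of $E$ is not itself presented as an isotopy of $D$, the paper then proves part (2) by a separate two-discs-in-$\bbR^3$ argument---so your observation that an isotopy-based proof of (1) would yield (2) for free is correct in spirit, but you do not yet have such a proof. (A smaller slip: a circle in $\interior H$ bounds a \emph{single} subdisc of the rectangle $H$, not two; fortunately that unique subdisc lies in $\interior H$ and so misses $\alpha_{I_0}$, and your innermost-circle step can be repaired.)
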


\begin{proof}\leavevmode

\begin{enumerate}
\item See Figures 4 and 5. Let $\Omega$ be the $2$-sphere boundary of a natural small (i.e.\ close, or tight) neighborhood of $I_0 \cup (S^2 \times \bdy I)$ in $S^2 \times I$. We may assume that $\Omega \cap H$ is a small transverse ($B$-type) arc of $H$. Then $E$ is obtained from $D$ by pushing small $D$-neighborhoods of the intersection arcs of $H$ with $\interior D$, along $H$ toward $H \cap I_0$, where they can be connect-summed with parallel copies of $\Omega$ to get rid of these unwanted intersections of $H$ with $D$. To elaborate a bit (for newbies), one might want to think of doing this operation one intersection arc of $H \cap \interior D$ at a time, beginning with the one which is closest in $H$ to $H \cap I_0$, then working on the next closest intersection-arc, etc., each time pushing $\interior D$ along a longer-but-smaller-radius tube and using a smaller (tighter) copy of $\Omega$ for the connect-summing.

\item This follows from the facts that $S^2 \times \interior{I} \setdifference I_0$ is homeomorphic to $\bbR^3$, and two (smooth) 2-discs in $\bbR^3$ with common boundaries are isotopic fixing the common boundary. (This latter fact is proved, after isotoping (one of) the discs to make their interiors disjoint near their boundaries, by a standard innermost circle-of-intersection argument. But beware of the ``standard mistake'': If $B$ is a $3$-ball in $\bbR^3$ whose boundary sphere is the union of a subdisc of $D$ and a subdisc of $E$ which intersect only in their common boundary circle, then $B$ may contain $C$, and so $B$ cannot be used to isotope the subdisc of $E$ off of $D$ (or vice versa) {\it{keeping $C$ fixed}}.)
\end{enumerate}

\end{proof}

\newpage

\begin{figure}[t]
\centering\includegraphics[scale = 1]{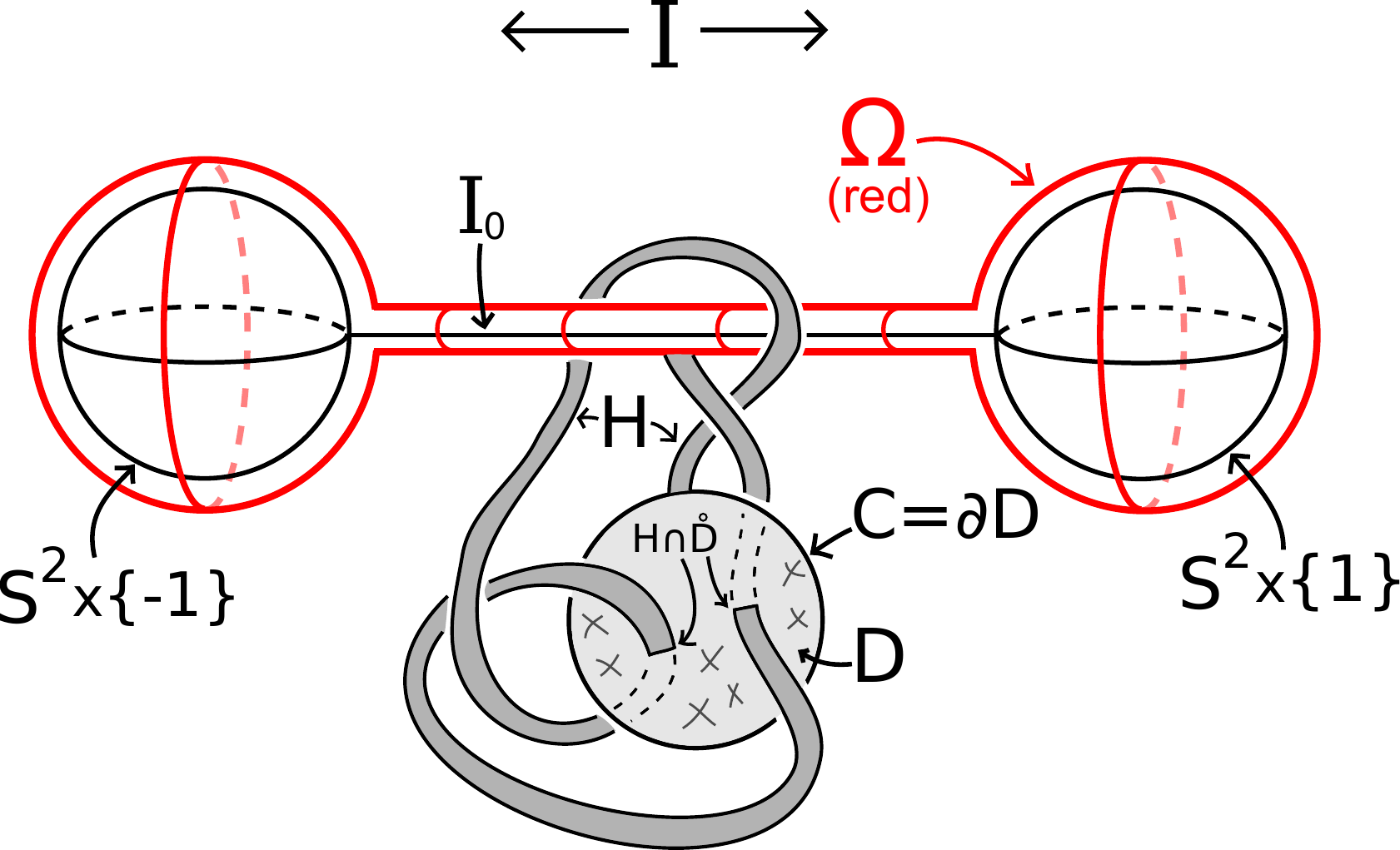}
\caption{The Step $2_1$  subsets $I_0, C, D, H$ and $\Omega$ of $S^2 \times I$}
\end{figure}

\begin{figure}[h]
\centering\includegraphics[scale = 1]{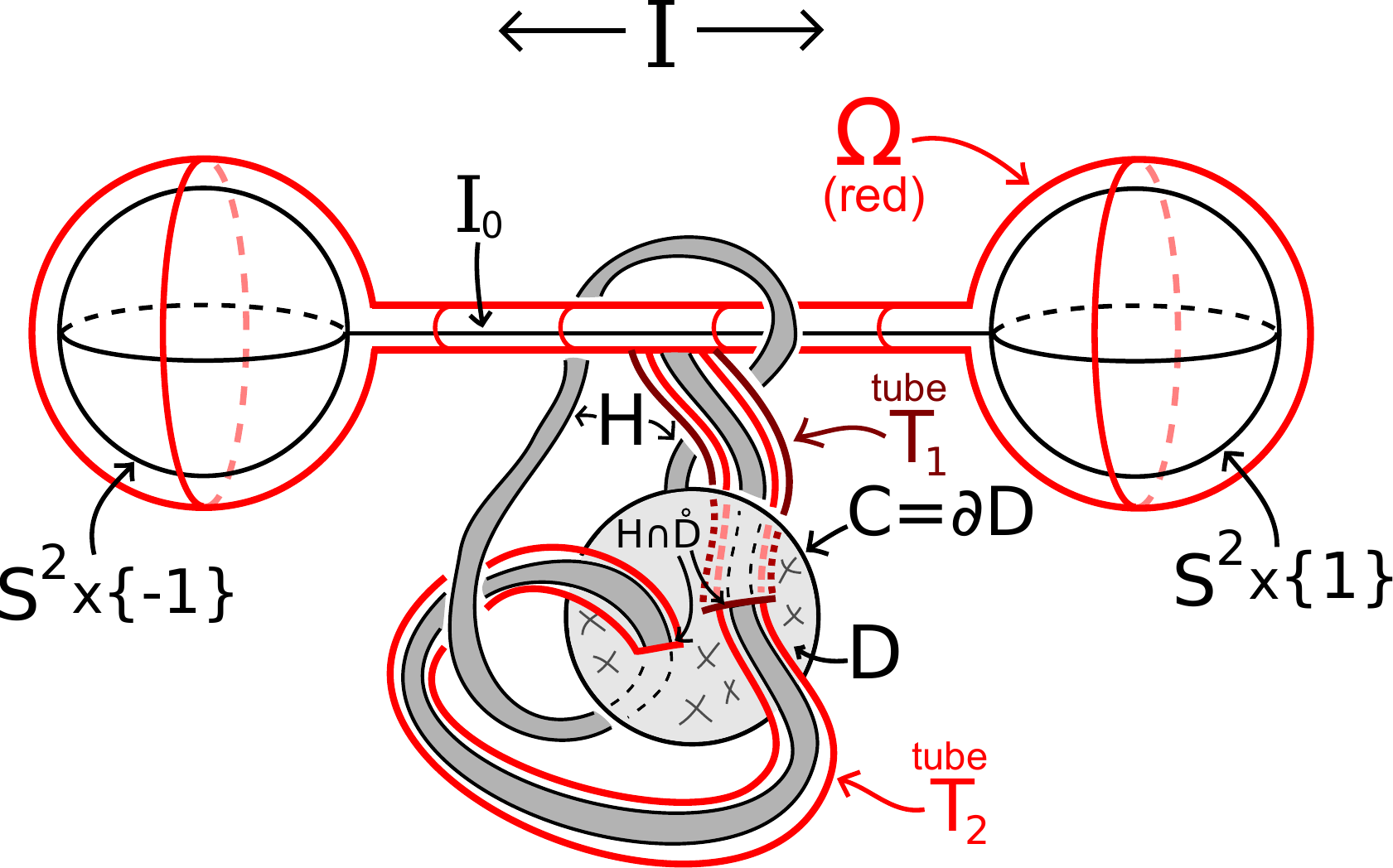}
\caption{Adding (to Figure 4) the tubes (shown as rectangles) $T_1$ and $T_2$}
\end{figure}

\newpage

There is an (arguably) alternative way of proving this Observation 2.3, as follows (the proof above is presented  \`{a} la Gabai):

\begin{claim}
There is an (ambient) isotopy of $S^2 \times I$, fixed on $S^2 \times \bdy I \cup I_0 \cup C$, which moves $H$ off of $\interior D$.
\end{claim}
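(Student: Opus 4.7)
The Claim is essentially a repackaging of Observation 2.3 in terms of moving $H$ rather than $D$. My approach is to first produce an ambient isotopy of $S^2\times I$ rel $S^2\times\bdy I\scups I_0\scups C$ that carries $D$ onto $E$, and then reverse its time parameter to get the required isotopy moving $H$ off $\interior D$. The only real content is the standard upgrade from ``isotopy of a $2$-disc rel boundary circle'' to ``ambient isotopy of the ambient $\bbR^3$ rel that circle.''

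\textbf{Step A (ambient upgrade).} By Observation 2.3(2), $D$ is isotopic to $E$ rel $C$ inside $N:=S^2\times\interior I\setdifference I_0\isom\bbR^3$. I would invoke the standard fact (the same innermost-circle technology alluded to just after Observation 2.3(2)) that in $\bbR^3$, any two smooth $2$-discs with a common boundary circle are ambient isotopic rel that circle, with the isotopy supported in an arbitrarily small compact neighborhood of the union of the two discs. This yields an ambient isotopy $\phi_t$ of $N$, with $\phi_0=\mathrm{id}_N$ and $\phi_1(D)=E$, whose support sits inside a compact subset of $N$ disjoint from a neighborhood of $I_0\cup S^2\times\bdy I$. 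Extending $\phi_t$ by the identity across $I_0$ and $S^2\times\bdy I$ gives an ambient isotopy of all of $S^2\times I$ fixing $S^2\times\bdy I\scups I_0\scups C$ pointwise.

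\textbf{Step B (invert and conclude).} Set $\psi_t:=\phi_t^{-1}$; this is again an ambient isotopy with $\psi_0=\mathrm{id}$, fixing the same set pointwise. Using part (1) of Observation 2.3, which forces $\interior E \scaps H=\emptyset$, I compute
\[
\psi_1(H)\scaps\interior D \;=\; \phi_1^{-1}(H)\scaps\phi_1^{-1}(\interior E) \;=\; \phi_1^{-1}\bigl(H\scaps\interior E\bigr) \;=\; \emptyset.
\]
Thus $\psi_t$ is the ambient isotopy the Claim asks for.

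\textbf{Main obstacle.} The one genuinely delicate step is the ambient-isotopy upgrade in Step A: the ``standard mistake'' warning carried over from Observation 2.3(2) must be respected. When reducing $D\scaps E$ by an innermost-circle argument, each innermost $3$-ball $B$ chosen to eliminate an intersection circle must be disjoint from $C$ (and, to keep compact support, from $I_0\cup S^2\times\bdy I$). Because $C=\bdy D=\bdy E$ lies on the boundary rather than in the interior of either disc, the innermost intersection circle always bounds a subdisc in $\interior D$ or $\interior E$, and one can select the associated innermost $3$-ball on the side that avoids $C$. Once this point is handled with care, the rest of the argument is formal.
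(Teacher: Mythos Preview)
Your argument is logically sound: inverting an ambient isotopy carrying $D$ to $E$ does yield an ambient isotopy fixing $S^2\times\bdy I\cup I_0\cup C$ whose time-$1$ map sends $H$ into the complement of $\interior D$, exactly as your computation in Step~B shows. However, your route is quite different from the paper's. The paper proves the Claim \emph{from scratch}, without invoking Observation~2.3: it replaces $H$ by its core arc $A$, transfers the picture to $\bbR^3$ with $D$ standardized, adjusts the algebraic intersection number $A\cdot\interior D$ to zero by spinning $A$ about $C$ near its attaching point, and then cancels the remaining $\pm$ pairs of intersections by (regular) homotopies of $A$, cleaning up any resulting self-crossings via the $3$-dimensional light bulb trick using the dual sphere $\bdy B^3_2\cong\Omega$. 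The whole point of the Claim in the paper is to serve as an \emph{alternative} derivation of the conclusion of Observation~2.3, so while your reduction back to Observation~2.3 is perfectly valid given that 2.3 already has an independent proof, it sidesteps the intended content. One small imprecision: in your ``Main obstacle'' paragraph you speak of choosing the innermost $3$-ball ``on the side that avoids $C$,'' but an embedded $2$-sphere in $\bbR^3$ bounds a unique compact ball, so there is no side to choose; the actual fix for the ``standard mistake'' (which the paper also leaves implicit) is a bit more delicate than a side-selection.
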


\begin{proof}
It suffices to prove this with $H$ replaced by a subarc $A$ of $H$, namely $A  = A \times \{\point\} \subset H = A \times B$. Since $S^2 \times \interior{I} \setdifference I_0$ is diffeomorphic to $\bbR^3$, we can transfer our setup to $\bbR^3$. To do so, identify the closed 3-ball that the 2-sphere $\Omega$ (above) bounds in $S^2 \times I \setdifference I_0$, with the standard 3-ball $B^3_2$ of radius 2 in $\bbR^3$, in such a manner that $D$ becomes the standard disc of radius 1 in the $x$-$y$ plane. Let arc $A_0$ be the interval $[1,2]$ in the $x$-axis, which joins $\bdy D$ to $\bdy B^3_2$. Then we can regard $A$ as being obtained from $A_0$ by a reembedding of $A_0$ into $\interior B^3_2 \setdifference \bdy D$ which is standard near $\bdy A_0$. We can isotope $A$ off of $\interior D$ keeping $C \cup \bdy B^3_2$ fixed, by 1) arranging that $A$ has (transverse algebraic) intersection number 0 with $\interior D$, by spinning $A$ around $\bdy D$ at its intersection point with $\bdy D$, and then 2) removing the points of intersection of $A$ with $\interior D$ in successive $\pm$ pairs which are innermost in $A$, noting that this is possible by homotopy and hence regular homotopy of $A$, and then that any self intersections in a regular homotopy of $A$ can be gotten rid of by the 3D light bulb trick, using $\bdy B^3_2$.
\end{proof}

Resuming our logical thread, to complete this Step $2_1$ we perform an isotopy of $\Gamma$ which moves $D \times \{-7\}$ up to become $E \times \{-3\}$, where it can be cancelled with $H \times \{-3\}$. The idea is that, while pushing $D \times \{-7\}$ vertically upward, we perform the isotopy of $D$ to $E$ as we go along. In more detail, let $D_t \subset S^2 \times I$ denote the image of $D$ under the isotopy of Observation \ref{observation}, with (for notational convenience) $t$ running from $-7$ to $-3$, so that $D_{-7} = D$ and $ D_{-3} = E$. Then, for $t \in [-7,-3]$, let $\Gamma_t := (\Gamma \setdifference (D \times \{-7\} \cup C \times [-7,t)\,)) \cup D_t \times \{t\}$, which describes an isotopy of $\Gamma_{-7}$ to $\Gamma_{-3}$. The final position $\Gamma_{-3}$ of $\Gamma$ is vertical everywhere in $S^2 \times I \times [-8,-1]$ except at $J$-level $-3$, where $(E \cup H) \times \{-3\} \subset \Gamma_{-3}$ is in cancellable position. End of Step $2_1$.

We continue the proof of the Leveling Proposition 2.2, proceeding now to \smallskip

\par {\bf{Step $\mathbf{2_2}$. Here we suppose that $p = 2$.}}

In this case we note that if $H_1 \cap D_2 = \emptyset$ (in $S^2 \times I$, recall), then we can, by isotopy of $\Gamma$, push $H_1 \times \{-3\}$ down to to become $H_1 \times \{-6\}$, and push $D_2 \times \{-7\}$ up to become $D_2 \times \{-4\}$. Then we can cancel $D_1 \times \{-7\}$ against $H_1 \times \{-6\}$ using Step $2_1$, applied say in the region $\Gamma \cap S^2 \times I \times [-8,-5]$, and independently we can cancel $D_2 \times \{-4\}$ against $H_2 \times \{-3\}$ again using Step $2_1$, now applied say in the region $\Gamma \cap S^2 \times I \times [-5, -2]$. So, to accomplish this Step $2_2$ it suffices to prove

\begin{lemma}[Disjointness Lemma] By isotopy of $\Gamma \cap S^2 \times I \times [-4,-1] \rel \Gamma \cap S^2 \times \bdy(I \times [-4,-1])$ we can arrange that $H_1 \cap D_2 = \emptyset$. More precisely, after this repositioning $\Gamma \scaps S^2 \times I \times [-4,-1]$ will be in horizontal-vertical position, having (as at the start) two horizontal components, both at the same $J$-level $-3$, namely disjoint $1$-handles $H'_1 \times \{-3\}$ and (the original) $H_2 \times \{-3\}$, where $H'_1 \cap (I_0 \cup C_1) = H_1 \cap (I_0 \cup C_1)$, such that $H'_1 \cap D_2 = \emptyset$.
\end{lemma}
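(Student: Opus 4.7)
The plan is to replace the $1$-handle $H_1$ by a new band $H_1' \subset S^2 \times I$ with the same attaching arcs, $H_1' \cap (I_0 \cup C_1) = H_1 \cap (I_0 \cup C_1)$, and with $H_1' \cap D_2 = \emptyset$, and to realize this replacement as an isotopy of $\Gamma \cap S^2 \times I \times [-4,-1]$ rel boundary. In the final configuration, the vertical region at $J$-levels $[-4,-3]$ retains its cross-section $I_0 \cup C_1 \cup C_2$; the horizontal slice at $J$-level $-3$ is $H_1' \cup H_2$; and the vertical region at $J$-levels $[-3,-1]$ is populated by the trace of an isotopy from the new surgery arc $I_{1\mathendash 2}'$ (the result of surgering $I_0 \cup C_1 \cup C_2$ by $H_1' \cup H_2$) to the original $I_{1\mathendash 2}$ at $J$-level $-1$. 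Such an isotopy exists by the 3D Light Bulb Theorem, since both $I_{1\mathendash 2}'$ and $I_{1\mathendash 2}$ are spanning arcs in $S^2 \times I$ with the same endpoints in $\{\point\} \times \bdy I$.

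The construction of $H_1'$ follows the tubing/light-bulb strategy of Observation 2.3 and its Claim. Since $D_2$ is disjoint from the skeleton $I_0 \cup C_1 \cup C_2 \cup D_1$ and from $H_2$, the intersection $H_1 \cap D_2 \subset S^2 \times I$ is a disjoint union of arcs with endpoints on the two free long sides of $\bdy H_1$, together with circles in $\interior{H_1}$. I would identify the $3$-ball in $S^2 \times I \setdifference I_0$ bounded by $\Omega = \bdy \nbd(I_0 \cup S^2 \times \bdy I)$ with the standard $B^3_2 \subset \bbR^3$, and inside $B^3_2$ push the core arc $A_1$ of $H_1$ off of $D_2$. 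The essential tools are: (i) the topological disc $D_2 \cup H_2 \subset S^2 \times I$, whose boundary contains an arc on $I_0$ (namely $H_2 \cap I_0$) and which serves as a ``free light bulb'' allowing the algebraic intersection $A_1 \cdot D_2$ to be adjusted when it is nonzero; (ii) standard regular-homotopy moves to cancel innermost $\pm$-intersection-point pairs of $A_1$ with $D_2$; and (iii) the 3D light-bulb trick on the large sphere $\bdy B^3_2$ to clear self-intersections of $A_1$ introduced by the regular homotopy, exactly as in the Claim. Thickening the resulting isotopy of $A_1$ to a normal neighborhood in $H_1$ yields $H_1'$ with the required properties.

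The main obstacle will be carrying out this sequence of moves---tube-offs of intersection arcs, light-bulb unhookings of $A_1$ from $C_2$, and clean-up via the 3D light-bulb trick on $\bdy B^3_2$---consistently as an isotopy of $\Gamma$ rel all the boundary data on $\bdy(I \times [-4,-1])$. In particular, one must verify that the auxiliary disc $D_2 \cup H_2$ can be used freely (it is available since $H_2$ stays put at $J$-level $-3$ and $D_2$ stays put at $J$-level $-7$), that the motions of $A_1$ can be arranged to avoid $H_2$, $D_1 \setdifference C_1$, and the rest of the vertical skeleton, and that the induced ambient isotopy near $H_1$ does not disturb the rest of $\Gamma$. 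Once these constraints are tracked, the argument is essentially a refinement of the ideas already deployed in Step $2_1$.
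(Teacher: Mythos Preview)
Your proposal has a genuine gap: the isotopy of $A_1$ off $\interior D_2$ that you sketch need not exist under the constraints you impose. You require the ambient motion to fix $I_0 \cup C_1 \cup C_2 \cup H_2$ and to fix the attaching arcs of $H_1$. But then the algebraic intersection number $A_1 \cdot D_2$ is an invariant of the isotopy class of $A_1$ rel endpoints in $S^2 \times I \setdifference (I_0 \cup C_2)$, since $\bdy D_2 = C_2$ is held fixed and $A_1$ never crosses it. This number can perfectly well be nonzero (let $A_1$ pierce $D_2$ once), so no such isotopy exists in general. Your item (i), invoking the disc $D_2 \cup H_2$ as a ``free light bulb'' to adjust $A_1 \cdot D_2$, does not work: using that disc would mean pushing $A_1$ across $H_2$ or across $H_2 \cap I_0$, both of which are forbidden by your own constraints. (The paper's Exercise 2.6 is precisely about why a push of this flavor fails.)

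The paper's proof sidesteps this obstruction by a simple but essential maneuver you omit: it first lifts $H_1 \times \{-3\}$ vertically to $H_1 \times \{-2\}$, i.e.\ \emph{above} the level of $H_2$. At $J$-level $-2$ the ambient $1$-skeleton that must be respected is $C_1 \cup I_2$, where $I_2$ is the arc obtained by surgering $I_0 \cup C_2$ along $H_2$. Crucially, only part of $\bdy D_2 = C_2$ lies in $I_2$ (the short arc $H_2 \cap C_2$ does not), so there is no longer an intersection-number obstruction to pushing $H_1$ off $D_2$. Concretely, one uses the transverse $2$-sphere $\Delta$ to $I_2$ (chosen disjoint from $H_1$) to reroute the push-off so that the ambient isotopy fixes $I_2$; this fixed-$I_2$ condition is exactly what allows the motion to be damped level-preservingly in $[-3,-1]$ without disturbing the rest of $\Gamma$. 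After the push-off one lowers $H_1'$ back to level $-3$. The lift-then-push-then-lower is the missing idea in your argument.
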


\begin{proof}
Figures 6 through 9 are meant to illustrate the following discussion. The desired repositioning of $H_1 \times \{-3\}$ in $S^2 \times I \times [-4,-1]$ will involve some vertical (= $J$-direction) motion. To begin, we isotope $\Gamma$ so as to move $H_1 \times \{-3\}$ vertically upward to become $H_1 \times \{-2\}$, which connects $C_1 \times \{-2\}$ to $I_2 \times \{-2\}$, where $I_2 \subset S^2 \times I$ is the spanning arc gotten by doing surgery on $I_0 \cup C_2$ using $H_2$. So now $\Gamma \cap S^2 \times I \times (-3,-2) =  I_2 \times (-3,-2)$. Note that in $S^2 \times I$ the arc $I_2$ has a transverse embedded $2$-sphere, call it $\Delta$ (see Figure 8), which intersects $I_2$ transversely in a single point lying somewhere in the open arc $(I_2 \cap C_2)^{\circ}$. Also we can choose $\Delta$ so that it does not intersect $H_1$ (but it may intersect $D_1$, and it will intersect both $D_2$ and $H_2$). To describe $\Delta$ more precisely, let $I'_2$ be a subarc of $I_2$ which joins some point in $(I_2 \cap C_2)^{\circ}$ to the endpoint $1$ of $I_2$, so that $I'_2 \cap H_1 = \emptyset$. Then $\Delta$ is the boundary of a small regular neighborhood of $I'_2 \scups S^2 \times \{1\}$ in $S^2 \times I$.

Now in $S^2 \times I$ we can isotope $H_1$ off of $D_2$, producing $H'_1$, by pushing a small regular neighborhood $N$ in $H_1$ of the union of the arcs $H_1 \cap \interior{D_2}$, moving each component of $\interioroperator_{H_1}{N}$ keeping $\frontier_{H_1}{N}$ fixed, along a path in $D_2$ toward and past $C_2 = \bdy D_2$, using $\Delta$ to reroute this isotopy to make the moving $H_1$ never intersect $I_2$. (We note that the full isotopy-image of $H_1$ will intersect $\interior{D_1}\cup \interior{D_2}$ in every component of $\Delta \cap (\interior{D_1} \cup \interior D_2)$, including the circles. Not an issue.) Hence we may assume that we have an associated covering ambient isotopy of $S^2 \times I$ which {\it{leaves $I_2$ fixed}} (that's a key point!).

\newpage

 \begin{figure}[th]
\centering\includegraphics[scale = .55]{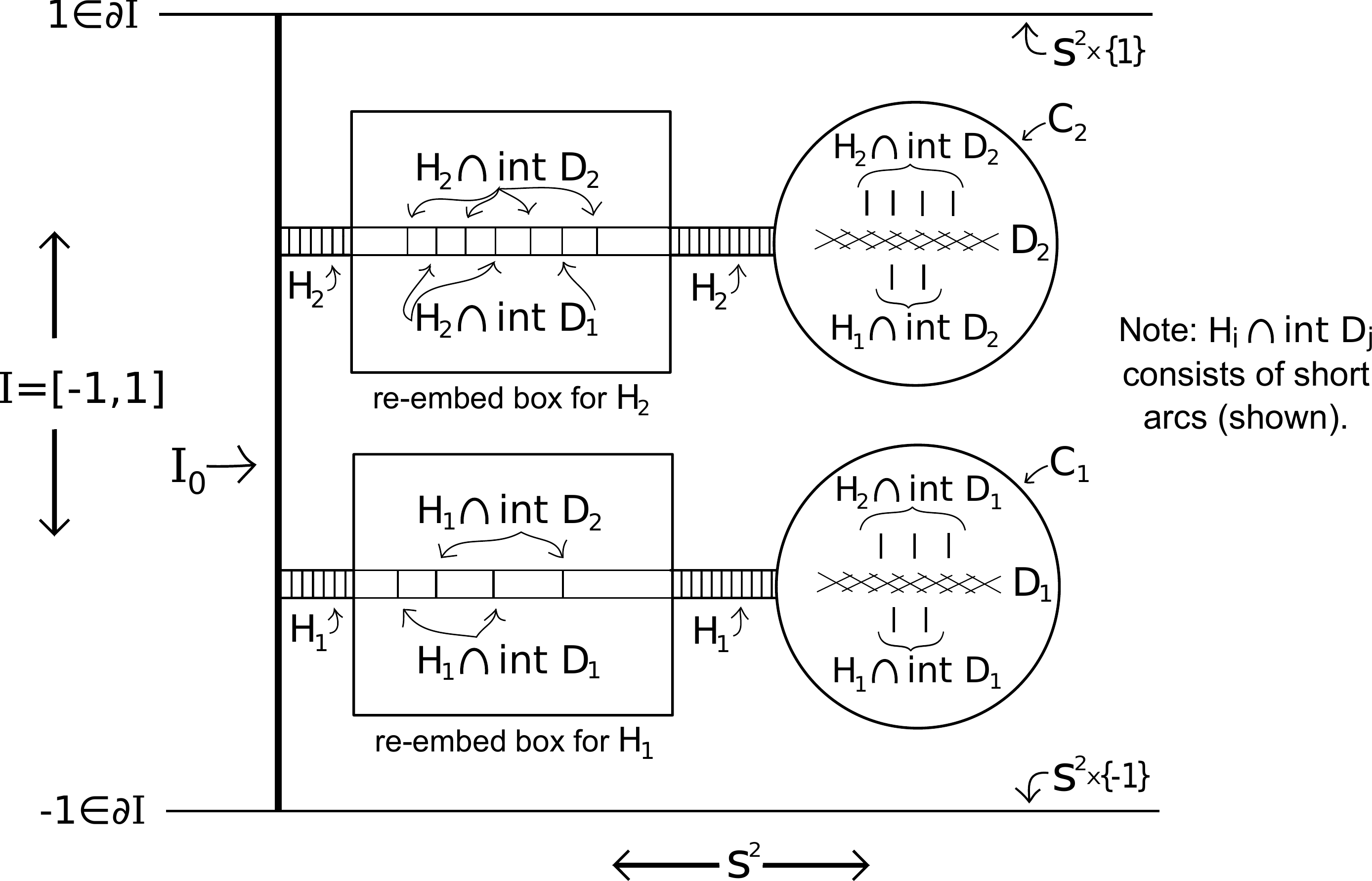}
\caption{The Step $2_2$ subsets $I_0, D_1, D_2, H_1$ and $H_2$ of $S^2 \times I$}
\end{figure}

\begin{figure}[bh]
\centering\includegraphics[scale = .55]{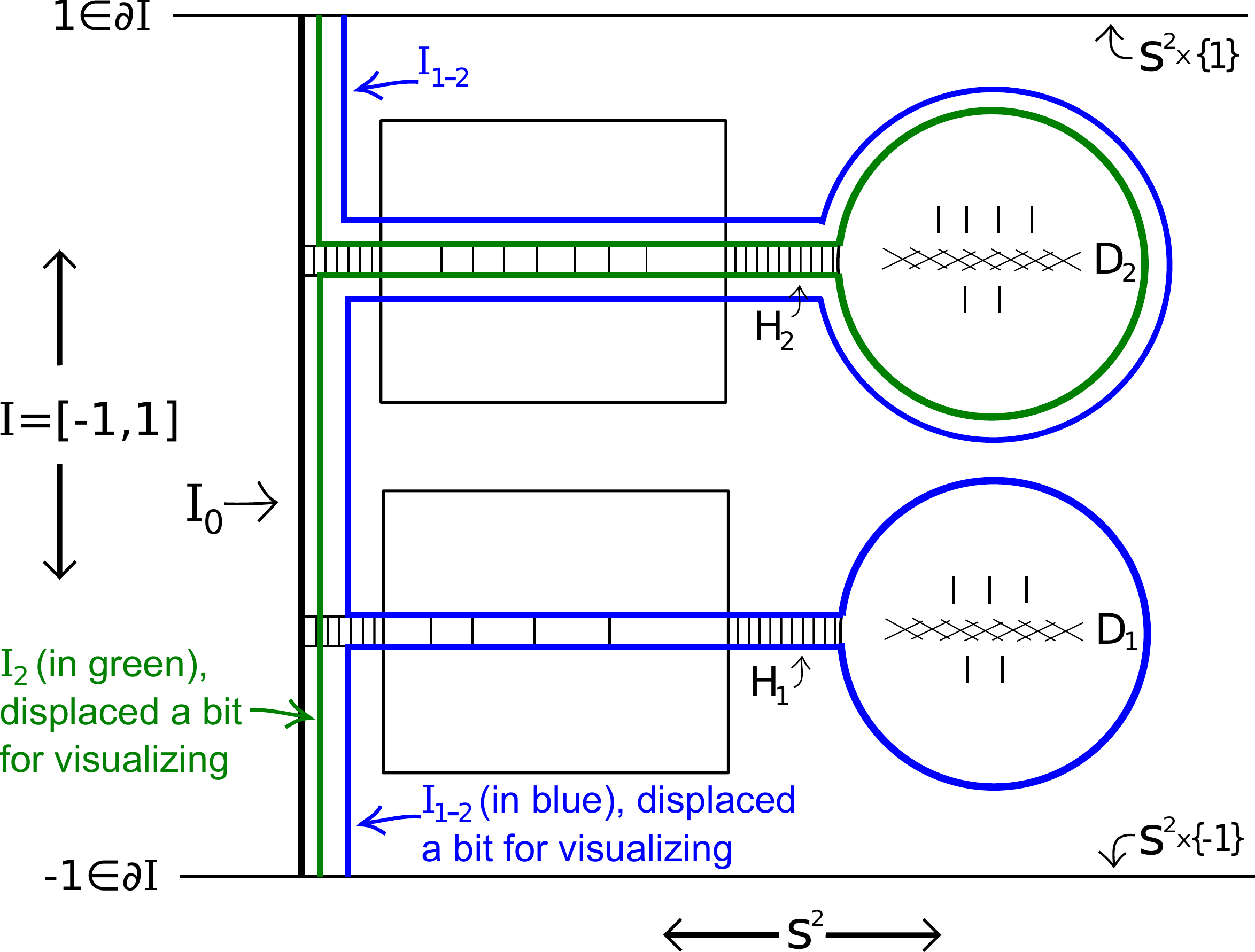}
\caption{Adding (to Figure 6) the subsets $I_{1\mathendash 2}$ and $I_2$ of $S^2 \times I$}
\end{figure}

\newpage

\begin{figure}[th]
\centering\includegraphics[scale = .55]{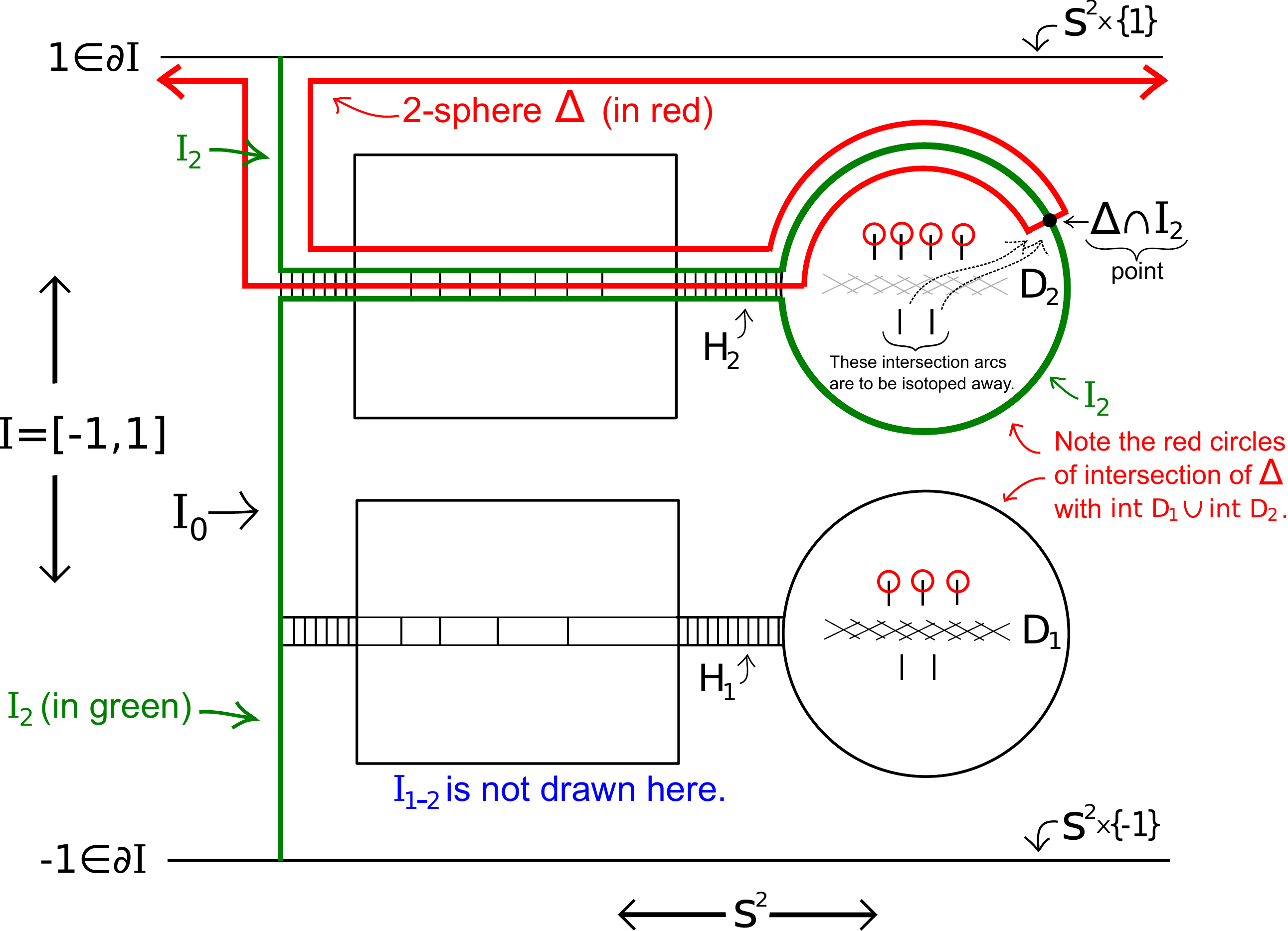}
\caption{Adding (to Figure 7) the $2$-sphere $\Delta$ in $S^2 \times I$ }
\end{figure}

\begin{figure}[bh]
\centering\includegraphics[scale = .55]{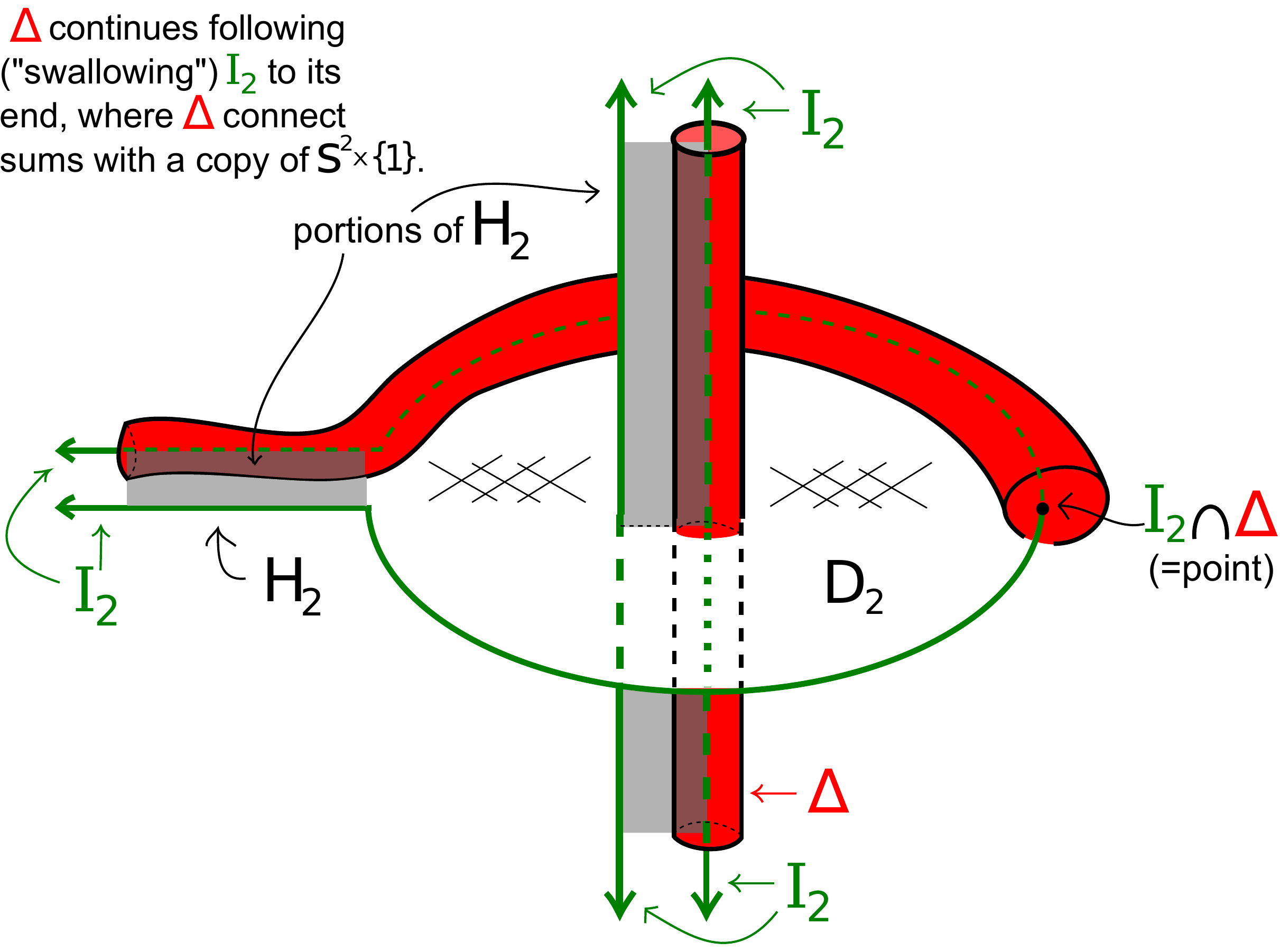}
\caption{A more accurate picture, in $S^2 \times I$, of $D^2$ and portions of $H_2$, $I_2$ (green) and $\Delta$ (red), in Step $2_2$ }
\end{figure}

\newpage

Performing this isotopy in-on-at the level $S^2 \times I \times \{-2\}$, it has a natural extension (by a standard time-parameter-damping scheme) to a $J$-level-preserving ambient isotopy of $S^2 \times I \times [-3,-1]$, fixing the end $J$-levels $-3$ and $-1$. This extension isotopy leaves fixed the portion of the current $\Gamma$ lying in $S^2 \times I \times [-3,-2)$ (recalling that $\Gamma \cap S^2 \times I \times (-3,-2) = I_2 \times (-3,-2)$ ), whereas the portion of $\Gamma$ in $S^2 \times I \times [-2,-1]$ is changed by a $J$-level-preserving isotopy, and so there $\Gamma \cap S^2 \times I \times (-2,-1]$ becomes the track of an isotopy of an arc. Now, to finish, the newly positioned $H'_1 \times \{-2\} \subset \Gamma$ can be pushed back downward vertically by isotopy of $\Gamma$ to become $H'_1 \times \{-3\}$ in $S^2 \times I \times \{-3\}$, where we note that it misses $D_2 \times \{-3\}$. (Note: the interior of the latter disc is {\it{not}} in $\Gamma$.) Hence the desired repositioning of the Disjointness Lemma has been achieved, completing its proof. 
\end{proof}

\begin{exercise}\rm
Instead of the above isotopy, note that one could try to isotope $H_1$ off of $\interior{D}_2$ by pushing the intersection arcs off of $D_2$ at $D_2 \cap H_2$, and then continue pushing them along $H_2$ and across $H_2 \cap I_0$ (everything being done at $J$-level $-2$ here). Why doesn't this isotopy work?
\end{exercise}

This completes Step $2_2$.
\smallskip

\par {\bf{Step $\mathbf{2_3}$. Here we let $p \ge 3$ be arbitrary.}}

This subStep is a routine extension of Step $2_2$, with a couple of minor points meriting attention. The desired analogue of the preceding Disjointness Lemma is

\begin{lemma}[Multiple Disjointness Lemma]
By isotopy of $\Gamma \scaps S^2 \times I \times [-4,-1] \rel \Gamma \cap S^2 \times \bdy(I \times [-4,-1])$ we can arrange that  $H_1 \cap D_i = \emptyset$ for each $i$,  $2 \le i \le p$. More precisely, after this repositioning $\Gamma \scaps S^2 \times I \times [-4,-1]$ will be in horizontal-vertical position, having (as at the start) $p$ horizontal components, all at the same $J$-level $-3$, namely pairwise disjoint $1$-handles $H'_1 \times \{-3\}$ and (the original) $H_i \times \{-3\}$, $2 \le i \le p$, where $H'_1 \cap (I_0 \cup C_1) = H_1 \cap (I_0 \cup C_1)$, such that $H'_1 \cap D_i = \emptyset$ for each $i$,  $2 \le i \le p$.
\end{lemma}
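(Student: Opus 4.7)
The plan is to mimic Step $2_2$ essentially verbatim, replacing the single transverse sphere $\Delta$ used there by a family $\Delta_2, \ldots, \Delta_p$ of spheres, one for each $0$-handle disc $D_i$ off which $H_1$ must be pushed. First, by isotopy of $\Gamma$ supported in $S^2 \times I \times [-3, -2]$, move $H_1 \times \{-3\}$ vertically upward to $H_1 \times \{-2\}$; at the new level it connects $C_1 \times \{-2\}$ to the spanning arc $I_{2\mathendash p} \times \{-2\}$, where $I_{2\mathendash p} \subset S^2 \times I$ is the result of performing the $p-1$ surgeries on $I_0 \scups C_2 \scups \cdots \scups C_p$ using the bands $H_2, \ldots, H_p$. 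Consequently $\Gamma \scaps S^2 \times I \times (-3, -2) = I_{2\mathendash p} \times (-3, -2)$.

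For each $i \in \{2, \ldots, p\}$, I would next construct a $2$-sphere $\Delta_i \subset S^2 \times I$ which meets $I_{2\mathendash p}$ transversely in a single point lying in $(I_{2\mathendash p} \scaps C_i)^{\circ}$ and which is disjoint from $H_1$. Imitating Step $2_2$, take $\Delta_i$ to be the boundary of a small regular neighborhood of $I'_i \scups (S^2 \times \{1\})$ in $S^2 \times I$, where $I'_i \subset I_{2\mathendash p}$ is a subarc joining the chosen interior point of $I_{2\mathendash p} \scaps C_i$ to the endpoint $1$ of $I_{2\mathendash p}$, arranged so that $I'_i \scaps H_1 = \emptyset$. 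Such an $I'_i$ exists because the attaching arcs of $H_1, \ldots, H_p$ were initially strung out along $I_0$ in $i$-order, with $H_1$'s attaching arc on the side of $I_0$ opposite the endpoint $1$; thus every $C_i$ is accessible from endpoint $1$ by a subarc of $I_{2\mathendash p}$ that never reaches the attaching region of $H_1$. (Equivalently, one can assemble the $I'_i$'s into a single tree in $I_{2\mathendash p}$ whose leaves lie in the interiors of the $C_i$'s, and take the boundary of a small regular neighborhood of its union with $S^2 \times \{1\}$ as a single multipoint ``light bulb'' sphere hitting $I_{2\mathendash p}$ once in each $C_i$; either version suffices.)

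Now, working in $S^2 \times I \times \{-2\}$, I would use the $\Delta_i$'s to isotope $H_1$ off each $D_i$ in turn: for each $i$, push the intersection arcs of $H_1 \scaps \interior D_i$ out along $D_i$ and across $C_i = \bdy D_i$, rerouting via $\Delta_i$ so as to avoid $I_{2\mathendash p}$, exactly as in Step $2_2$. The pairwise disjointness of the discs $D_i$ lets these pushes be carried out in succession (or simultaneously) without interference, and yields a new band $H'_1 \subset S^2 \times I$ with $H'_1 \scaps (I_0 \scups C_1) = H_1 \scaps (I_0 \scups C_1)$ and $H'_1 \scaps D_i = \emptyset$ for all $i \ge 2$. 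Following Step $2_2$, extend this in-level isotopy of $S^2 \times I \times \{-2\}$ by a standard time-parameter-damping scheme to a $J$-level-preserving ambient isotopy of $S^2 \times I \times [-3,-1]$ that fixes the end levels $-3$ and $-1$; this leaves $\Gamma \scaps S^2 \times I \times [-3,-2)$ unchanged while replacing $\Gamma \scaps S^2 \times I \times [-2,-1]$ by the trace of a new isotopy. Finally, push $H'_1 \times \{-2\}$ vertically back down to $H'_1 \times \{-3\}$, achieving the desired horizontal-vertical repositioning.

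The point meriting attention is the joint construction of the $\Delta_i$'s --- namely, simultaneously choosing subarcs $I'_i$ disjoint from $H_1$ --- which rests on the $i$-ordered placement of the attaching arcs along $I_0$. Once this combinatorial arrangement is noted, the $p=2$ argument of Step $2_2$ applies in parallel at each disc $D_i$, and no genuinely new ideas are required.
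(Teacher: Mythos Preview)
Your argument follows the paper's proof essentially verbatim: lift $H_1$ to level $-2$, build transverse spheres $\Delta_i$ from subarcs $I'_i \subset I_{2\mathendash p}$ together with $S^2 \times \{1\}$ (using the $i$-ordering of the attaching arcs to keep $I'_i$ away from $H_1$), reroute $H_1$ off each $D_i$ along $\Delta_i$ while fixing $I_{2\mathendash p}$, damp in the $J$-direction, and push back down.

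The one place where the paper is more careful is your sentence ``the pairwise disjointness of the discs $D_i$ lets these pushes be carried out \ldots\ without interference.'' That is not quite the reason: the reroute along $\Delta_i$ typically does pass through $\interior D_j$ for $j \ne i$, so pushing off $D_2$ first can create \emph{new} intersections of the modified $H_1$ with $D_3,\ldots,D_p$ and may also make $\Delta_3,\ldots,\Delta_p$ meet the modified band. The paper resolves this by choosing the regular-neighborhood radii $0<\epsilon_2<\epsilon_3<\cdots<\epsilon_p$ and decomposing $\Delta_i = \Delta'_i \cup \Delta''_i$ (small transverse cap $\cup$ tube); with this nesting the rerouting pieces $\Delta''_i$ are pairwise disjoint, so the $p-1$ isotopies have pairwise disjoint support and genuinely commute. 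You correctly flag ``the joint construction of the $\Delta_i$'s'' as the point meriting attention --- this radius ordering is exactly what that attention amounts to. (Your parenthetical about a single sphere meeting $I_{2\mathendash p}$ once in each $C_i$ does not quite work as stated, since the boundary of a neighborhood of $I'_2 \cup S^2\times\{1\}$ meets $I_{2\mathendash p}$ only once; but it is not needed.)
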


\begin{proof}
We mimic the proof of the preceding Lemma. To make our choices easier and the notation simpler, we will assume (as suggested earlier) that the pairwise disjoint $H_i$-attaching arcs $H_i \cap I_0$, for $1 \le i \le p$, appear in $I_0$ \emph{in increasing $i$-order}, with $H_1 \cap I_0$ being closest to the endpoint $-1$ of $I_0$ and with $H_p \cap I_0$ being closest to the endpoint $1$ of $I_0$. To begin the construction of the proof, isotope $\Gamma$ (as before) so as to move $H_1 \times \{-3\}$ vertically upward to become $H_1 \times \{-2\}$, which connects $C_1 \times \{-2\}$ to $I_{2\mathendash p} \times \{-2\}$, where $I_{2\mathendash p} \subset S^2 \times I$ is the spanning arc gotten by doing $p-1$ surgeries on $I_0 \cup \bigcup_{2 \le i \le p} C_i$ using $\bigcup_{2 \le i \le p} H_i$. So now $\Gamma \cap S^2 \times I \times (-3,-2) =  I_{2\mathendash p} \times (-3,-2)$. Note that in $S^2 \times I$ the arc $I_{2\mathendash p}$ has $p-1$ transverse embedded $2$-spheres, called $\Delta_i$ for $2\le i \le p$, with $\Delta_i$ intersecting $I_{2\mathendash p}$ transversely in a single point which lies somewhere in the open arc $(I_{2\mathendash p} \cap C_i)^{\circ}$. Also we can choose each $\Delta_i$ so that it does not intersect $H_1$ (or $\bigcup_{1 \le j < i} H_i$, for that matter. But $\Delta_i$ may intersect $\bigcup_{1 \le j < i}  D_j$, and it will intersect each $D_j$ and each $H_j$ for $i < j \le p$.) However, we do {\it{not}} choose these $2$-spheres to be disjoint (although such disjoint spheres exist), for reasons that will become clear. To describe each $\Delta_i$ more precisely, let $I'_i$ be a subarc of $I_{2\mathendash p}$ which joins some point in $(I_{2\mathendash p} \cap C_i)^{\circ}$ to the endpoint $1$ of $I_{2\mathendash p}$. Note that $I'_i \cap H_1 = \emptyset$ (indeed $I'_i \cap \bigcup_{1 \le j < i} H_j = \emptyset$), for that matter) and $I'_2 \supset I'_3 \supset\ldots \supset I'_p$. Then we let $\Delta_i$ be the boundary of a small $\epsilon_i $-regular neighborhood of $I'_i \cup S^2 \times \{{1}\}$ in $S^2 \times I$, with $0 < \epsilon_2 < \epsilon_3 <\ldots < \epsilon_p$. These latter inequalities mean that the $\Delta_i$\pluralspace s will intersect, as we now explain. Express each sphere $\Delta_i$ as the union of two closed $2$-discs, $\Delta_i = \Delta'_i \cup \Delta''_i$ whose interiors are disjoint, where $\Delta'_i$ is an $\epsilon_i$-radius $2$-disc which is transverse to $(I_{2\mathendash p} \cap C_i)^{\circ}$, with say the centerpoint of $\Delta'_i$ coinciding with the midpoint of $I_{2\mathendash p} \cap C_i$. Then $\Delta''_i := \Delta_i \setdifference \interior{\Delta}'_i$ . Note that, for $2 \le i < j \le p$, we have that $\Delta_i \cap \Delta_j = \interior{\Delta}''_i \cap \interior \Delta'_j$, and this intersection is an $\epsilon_i$-radius transverse linking circle to $(I_{2\mathendash p} \cap C_j)^{\circ}$.

Now in $S^2 \times I$ we can isotope $H_1$ off of $\bigcup_{2 \le i \le p} D_i$, just as before, to produce $H'_1$. We'll describe this process for an individual $D_i$, noting that all $p-1$ of these upcoming isotopies can be chosen to have pairwise disjoint support, and so they can be performed (composed) either sequentially or simultaneously, to produce the desired isotopy of $H_1$ to $H'_1$. Fix $i \in \{2, 3,\ldots, p\}$. As before, now we push a small regular neighborhood $N_i$ in $H_1$ of the arcs $H_1 \cap \interior D_i$, moving each component of $N_i$ along a path in $D_i$  toward and past $C_i = \bdy D_i,$ using $\Delta''_i$ to reroute this isotopy to make the moving $H_1$ never intersect $I_{2\mathendash p}$. (We note that this isotopy-image of $H_1$ will intersect $\bigcup_{1 \le j \le p} \interior D_j$ in every component of $\Delta''_i \cap \bigcup_{1 \le j \le p} \interior D_j$, including the circles. Again, not an issue.) Hence we may assume that we have an associated covering ambient isotopy of $S^2 \times I$ which{ \it{leaves $I_{2\mathendash p}$ fixed}} (again, that's a key point!).

Performing all of these isotopies (one for each $i \in \{2, 3,\ldots, p\}$) on the level $S^2 \times I \times \{-2\}$, this combined isotopy has a natural extension (as in Step $2_2$) to a $J$-level-preserving ambient isotopy of $S^2 \times I \times [-3,-1]$, fixing the end $J$-levels $-3$ and $-1$. This extension isotopy leaves fixed the portion of the current $\Gamma$ lying in $S^2 \times I \times [-3,-2)$ (recalling that $\Gamma \cap S^2 \times I \times (-3,-2) = I_{2\mathendash p} \times (-3,-2)$ ), whereas the portion of $\Gamma$ in $S^2 \times I \times [-2,-1]$ is changed by a $J$-level-preserving isotopy, and so there $\Gamma \cap S^2 \times I \times (-2,-1]$ becomes the track of an isotopy of an arc. Now, to finish, the newly positioned $H'_1 \times \{-2\} \subset \Gamma$ can be pushed vertically back downward by isotopy of $\Gamma$ to become $H'_1 \times \{-3\}$ in $S^2 \times I \times \{-3\}$, where we note that it misses each $D_i \times \{-3\}$, for $2 \le i \le p$. (Note: the interiors of the latter discs are {\it{not}} in $\Gamma$.) Hence the desired repositioning of the Multiple Disjointness Lemma has been achieved, completing its proof.
\end{proof}

As before, after applying this MD Lemma we can, by isotopy of $\Gamma$, push $H'_1 \times \{-3\}$ vertically downward to to become $H'_1 \times \{-6\}$, and push each $D_i \times \{-7\}$, for $2 \le i \le p$, vertically upward to become $D_i \times \{-4\}$. Then we can cancel $D_1 \times \{-7\}$ against $H'_1 \times \{-6\}$ using Step $2_1$ (applied say in the region $\Gamma \cap S^2 \times I \times [-8,-5]$), and by induction on $p$ we can cancel the remaining $D_i \times \{-4\}$ against the remaining $H_i \times \{-3\}$ (now working say in the region $\Gamma \cap S^2 \times I \times [-5, -2]$).

This completes Step $2_3$, and hence Step 2 and the proof of the Leveling Proposition.
\end{proof} 

\par {\bf{Interlude and Summary.}}

Before proceeding to Step 3 we offer here a summary of the overall proof of the 4D-LBT in terms of homotopy groups. In the following, let $\Embeds^+_{\pitchfork}[S^2, S^2 \times S^2]$ denote the space of smooth embeddings $\{\phi \colon S^2 \hookrightarrow S^2 \times S^2\}$ which, for some fixed basepoint $(z_1,z_2) \in S^2 \times S^2$, satisfy that $\phi(S^2) \cap S^2 \times \{z_2\} = \{(z_1,z_2)\}$, transversally with positive intersection number. Let $\Embeds_{\bdy\textsl{std}}[I \times J, S^2 \times I \times J]$ denote the space of smooth boundary-faithful embeddings $\{\psi \colon I \times J \hookrightarrow S^2 \times I \times J\}$ of a product $I \times J$ of two closed intervals, which carry some neighborhood $N$ of $\bdy (I \times J)$ in $I \times J$ ``identically'' onto $\{w_0\} \times N$ for some distinguished $w_0 \in S^2$.
Similarly let $\Embeds_{\bdy\textsl{std}}[I, S^2 \times I]$ denote the space of smooth boundary-faithful embeddings $\{\psi \colon I \hookrightarrow S^2 \times I\}$ of a closed interval $I$ which carry some neighborhood $N$ of $\bdy I$ in $I$ ``identically" onto $\{w_0\}\times N$.

Using these spaces the proof of the 4D-LBT can be summarized as follows:
\medskip

$\begin{array}{rll}
\pi_0(\Embeds^+_{\pitchfork}[S^2, S^2 \times S^2])  &\isom \pi_0(\Embeds_{\bdy\textsl{std}}[I \times J,S^2 \times I \times J])&\text{(by Step 1)}\\
& \isom \pi_1(\Embeds_{\bdy\textsl{std}}[I, S^2 \times I])&\text{(by Step 2)}\\
& \isom \pi_2(S^2).&\text{(by Step 3)}
\end{array}$

We now turn to:

\subsection*{Step 3. Computing $\bold{\pi_1(\Embeds_{\bdy\textsl{std}}[I,S^2 \times I])}$.}

Here we use a standard fibration-type argument to show that $\pi_1(\Embeds_{\bdy\textsl{std}}[I,S^2 \times I]) \isom \pi_2(S^2)$. We note that Gabai, for this Step, appeals to a deep and demanding result of Allen Hatcher, that $\Diffeo(I \times S^2 \rel \bdy)$ is homotopy equivalent to $\Omega (O(3))$. That's acceptable, but massive overkill imho. Gabai only needs Hatcher's result on the level of $\pi_1$, which the following classical argument shows.

Let $I  \vee_{\bdy}B^3$ denote the wedge of a closed interval $I$ with a compact $3$-ball $B^3$, with the wedge point being in the boundary of each space, and let $e_0$ denote the non-wedge boundary point of $I$. As a specific model we let $I \vee_{\bdy} B^3 = [-3,-1] \times \{(0,0)\} \cup B^3_1 \subset \bbR^3$, where $B^3_r$ denotes the closed round ball in $\bbR^3$ of radius $r$ centered at the origin. Then $e_0 = (-3,0,0)$. Let $\Embeds^+_{\bdy\textsl{std}}[I \vee_{\bdy} B^3, B^3 = B^3_3]$ denote the space of smooth embeddings $\{\mu \colon I \vee_{\bdy} B^3 \hookrightarrow B^3_3\}$ which are orientation-preserving on $B^3$ and are standard (i.e.\ the identity-inclusion) on some neighborhood of $e_0$, such that $\mu^{-1}(\bdy B^3_3) = \{e_0\}$. Let $\Embeds^+[B^3, \interior B^3]$ denote the space of orientation-preserving smooth embeddings. Then we have a natural ``forgetful" map $\Embeds^+_{\bdy\textsl{std}}[I \vee_{\bdy} B^3, B^3] \twoheadrightarrow \Embeds^+[B^3, \interior B^3]$ gotten by restriction to the source $B^3$. This map is a fibration, with the fiber being our space of interest $\Embeds_{\bdy\textsl{std}}[I,S^2 \times I]$. (We note that, alternatively, and perhaps making more transparent the upcoming remarks, we could restrict the embeddings of the $B^3$\pluralspace s in these total and base spaces to be scaled isometries, that is, isometries multiplied by a varying scalar $s > 0$.)

\begin{figure}[hb]
\centering\includegraphics[scale = .7]{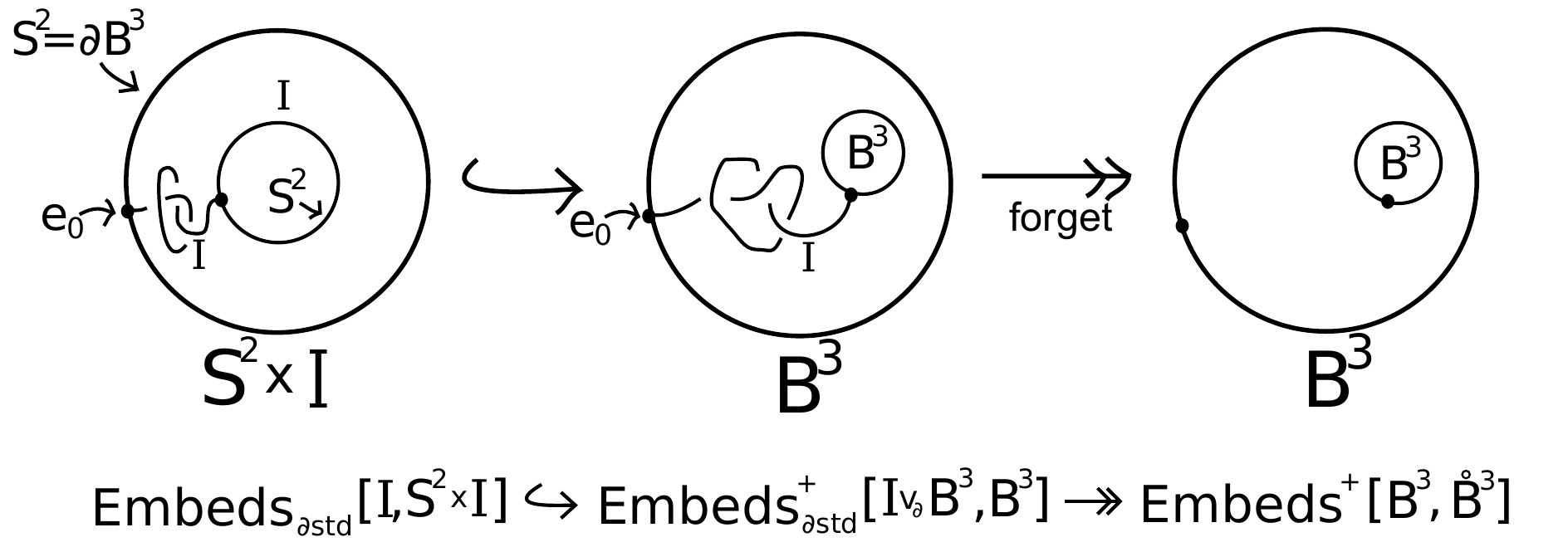}
\caption{The fibration of Step 3}
\end{figure}

\newpage

From this fibration we obtain an exact sequence of homotopy groups:

\begin{align*}
\ldots\rightarrow &\pi_2(\Embeds^+[B^3, \interior B^3]) \rightarrow \pi_1(\Embeds_{\bdy\textsl{std}}[I,S^2 \times I]) \rightarrow\pi_1(\Embeds^+_{\bdy\textsl{std}}[I \vee_{\bdy} B^3, B^3]) \rightarrow\\
&\pi_1(\Embeds^+[B^3, \interior B^3]) \rightarrow\pi_0(\Embeds_{\bdy\textsl{std}}[I,S^2 \times I]) \rightarrow\ldots
\end{align*}
By well-known arguments in smooth topology we know that $\Embeds^+_{\bdy\textsl{std}}[I \vee_{\bdy} B^3, B^3]$ has the homotopy type of $SO(2) \isom S^1$ and $\Embeds^+[B^3, \interior B^3]$ has the homotopy type of $SO(3) \isom \bbR P^3$, and the forgetful map above corresponds to the natural inclusion $SO(2) \hookrightarrow SO(3)$. Hence the above sequence of homotopy groups becomes
\begin{align*}
\ldots\rightarrow& \pi_2(SO(3)) \rightarrow
\pi_1(\Embeds_{\bdy\textsl{std}}[I,S^2 \times I]) \rightarrow \pi_1(SO(2)) \rightarrow\\
&\pi_1(SO(3)) \rightarrow \pi_0(\Embeds_{\bdy\textsl{std}}[I,S^2 \times I])\rightarrow\ldots.
\end{align*}
Evaluating these groups we obtain the exact sequence
$$ \ldots \rightarrow 0 \rightarrow \pi_1(\Embeds_{\bdy\textsl{std}}[I,S^2 \times I]) \rightarrow \bbZ \twoheadrightarrow \bbZ/2 \rightarrow 0 \ldots, $$
which implies that $\pi_1(\Embeds_{\bdy\textsl{std}}[I,S^2 \times I]) \isom \bbZ$.

We get more information if we compare directly the two fibration sequences
$$ \Embeds_{\bdy\textsl{std}}[I,S^2 \times I]) \hookrightarrow \Embeds^+_{\bdy\textsl{std}}[I \vee_{\bdy} B^3, B^3] \twoheadrightarrow \Embeds^+[B^3, \interior B^3] $$
with the well-known
$$ SO(2) \hookrightarrow SO(3) \twoheadrightarrow S^2 $$
(where recall the last map here can be defined as: given a $3 \times 3$ orthogonal matrix, restrict to its first column vector). As remarked (noting, curiously, the offset by ``one position''), the last two terms of the first fibration above are homotopically equivalent to the first two terms of the latter fibration. Thus $\Embeds_{\bdy\textsl{std}}[I,S^2 \times I]$ is homotopically equivalent to the loop space $\Omega(S^2)$ (see e.g. \cite[p. 409]{hatcher}), via the natural map given by projection of $S^2 \times I$ to $S^2$. Hence $\pi_1(\Embeds_{\bdy\textsl{std}}[I,S^2 \times I]) \isom \pi_1(\Omega(S^2)) \isom \pi_2(S^2) \isom \bbZ$. It is fairly easy to see that this isomorphism $\pi_1(\Embeds_{\bdy\textsl{std}} [I, S^2 \times I])) \isom \bbZ$ is given by the degree of the map $(I \times I, \bdy(I \times I)) \rightarrow (S^2 \times I, \point \times I) \rightarrow (S^2, \point)$ that is induced by a $\pi_1$-representative map $I \rightarrow \Embeds_{\bdy\textsl{std}}[I,S^2 \times I]$, with $S^2 \times I \rightarrow S^2$ being the projection.

This completes the proof of Gabai's delightful 4D-LBT.

\bibliographystyle{abbrv}

\vspace{5pt}
email: rde@math.ucla.edu \qquad Errata, comments, suggestions and-or questions are welcomed. All will be read, but no response is promised. This was fun. Thanks, Dave.

\end{document}